\documentclass[a4paper,11pt,reqno,twoside]{amsart}
\usepackage{GL_3_amplification}
\begin{document}%
\section{Introduction and statement of the results}%
\subsection{Motivation: sup-norms of $GL(n)$ Hecke-Maa$\ss$ cusp forms}%
Let $f$ be a $GL(n)$ Maa$\ss$ cusp form and $K$ be a fixed compact subset of $SL_n(\R)/SO_n(\R)$ (see \cite{MR2254662}). The generic or local bound for the sup-norm of $f$ restricted to $K$ is given by
\begin{equation*}
\abs{\abs{f\vert_{K}}}_\infty\ll\lambda_f^{n(n-1)/8}
\end{equation*}
where $\lambda_f$ is the Laplace eigenvalue of $f$ (see \cite{SaMo}). Note that F.~Brumley and N.~Templier noticed in \cite{BrTe} that the previous bound does not hold when $n\geq 6$ if $f$ is not restricted to a compact.
\par
If $f$ is an eigenform of the Hecke algebra, however, then the generic bound is not expected to be the correct order of magnitude for the sup-norm of the restriction of $f$ to a fixed compact.  This is essentially due to the fact that the Hecke operators are additional symmetries on the 
ambient space. In other words, we expect there to exist an absolute positive constant $\delta_n>0$ such that
\begin{equation}\label{eq_expect}
\abs{\abs{f\vert_{K}}}_\infty\ll\lambda_f^{n(n-1)/8-\delta_n}.
\end{equation}
\par
H.~Iwaniec and P.~Sarnak proved the bound given in \eqref{eq_expect} in \cite{MR1324136} when $n=2$ for $\delta_2=1/24$. Note that improving this constant $\delta_2$ seems to be a very delicate open problem. The case $n=3$ was completed by the authors in \cite{HoRiRo} with $\delta_3=1/124$. The general case was done, without an explicit value for $\delta_n$, in a series of recent impressive works by V.~Blomer and P.~Maga in \cite{BlMa1} and in \cite{BlMa2}. 
\par
All of the above achievements (and much more) were made possible thanks to generalizations of the amplification method developed by W.~Duke, J.~Friedlander and H.~Iwaniec for $GL(1)$ and $GL(2)$ (see \cite{MR1137896}, \cite{MR1166510} and \cite{MR1258904} for example).  In particular, the proof of \eqref{eq_expect} for $n=3$ with $\delta_3=1/124$ relies on Theorem \ref{theo_A} of this article which was stated without proof in \cite{HoRiRo} as Proposition 4.1\footnote{Theorem \ref{theo_A} and Proposition 4.1 in \cite{HoRiRo} are not entirely identical.  Since releasing our first article \cite{HoRiRo}, we have noticed a simplification in the construction of the amplifier.  Therefore, Theorem \ref{theo_A} only contains the identities in Proposition 4.1 of \cite{HoRiRo} which are necessary for the amplification method.  The implied power saving in the Laplace eigenvalue for the sup-norm bound remains the same.}. For the sake of completeness and future use, we provide the full details of the proof of Theorem \ref{theo_A}, including computations, here in this article. 
\subsection{The $GL(2)$ and $GL(3)$ amplifier}%
The general principle behind the construction of an amplifier, is the existence of an identity which allows one to write a non-zero constant as a finite sum of Hecke eigenvalues.  In the most basic context of $GL(2)$ automorphic forms, this identity is 
\begin{equation}\label{gl2id}
\lambda_f(p)^2-\lambda_f(p^2)=1 
\end{equation}
where $p$ is any prime and $\lambda_f(n)$ is the $n$-th Hecke eigenvalue of a Hecke-Maa\ss\;cusp form $f$ of full level, i.e.  $T_n f=\lambda_f(n) f$ where
$$
(T_n f)(z)=\frac{1}{\sqrt{n}} \sum_{ad=n}\; \sum_{b=1}^d f\left(\frac{az+b}{d}\right).
$$
One may interpret the above identity as the fact that the Rankin-Selberg convolution factors as the product of the adjoint square and the Riemann zeta function and therefore has a pole at $s=1$.  
\par
From the identity \eqref{gl2id}, one constructs an amplifier 
$$
A_f:=\left|\sum_\ell \alpha_\ell \lambda_f (\ell) \right|^2
$$
with 
\begin{equation*}
\alpha_{\ell}\coloneqq \begin{cases}
\lambda_{f_0}(\ell) & \text{if $\ell \leq \sqrt{L}$ is a prime number,} \\
-1 & \text{if $\ell \leq L$ is the square of a prime number,} \\
0 & \text{otherwise}
\end{cases}
\end{equation*}
for some fixed form $f_0$.  The advantage to constructing such an amplifier is that it is expected to be small\footnote{This follows, at least conditionally, from a suitable version of the Generalized Riemann Hypothesis. However, we do not need to use this fact.} for general forms $f$ while satisfying the lower bound $A_{f} \gg_\varepsilon L^{1-\varepsilon}$ when $f=f_0$.  
\par
Reinterpreting \eqref{gl2id} in terms of Hecke operators, we may write
$$
T_p \circ T_p - T_{p^2}=Id.
$$
In application to the sup-norm problem for GL(2) via a pre-trace formula argument, this translates into a need to geometrically understand the behavior of the following collection of operators on an automorphic kernel:
$$
T_p, T_p \circ T_q^\ast, T_p \circ T_{q^2}^\ast \textnormal{ and } T_{p^2} \circ T_{q^2}^\ast
$$
both in the cases of primes $p=q$ and $p\neq q$. Since the Hecke operators $T_n$ in $GL(2)$ are self-adjoint and computationally pleasant to work with due to their relatively simple composition law, one quickly computes that the above collection of Hecke operators may be reduced to the study of
$$
T_p, T_{pq}, T_{pq^2} \textnormal{ and } T_{p^2 q^2}.
$$
In truth, one has an opportunity to further reduce the collection of necessary Hecke operators through the simple inequality
\begin{equation}\label{ampineq}
A_f \leqslant 2 \left|\sum_{p} \alpha_p \lambda_f (p) \right|^2 + 2 \left|\sum_{p} \alpha_{p^2} \lambda_f (p^2) \right|^2.
\end{equation}
Indeed, an appropriate application of \eqref{ampineq} (see for example \cite{BHM}) allows one to further restrict the set of necessary Hecke operators to
$$
T_{pq} \textnormal{ and } T_{p^2 q^2}
$$
both in the cases of primes $p=q$ and $p\neq q$.
\par
The case of $GL(3)$ is much more computationally involved due to the lack of self-adjointness of the Hecke operators and their multiplication law.  Instead of looking at identities involving Hecke eigenvalues, we start immediately with the Hecke operators themselves (see \S \ref{sec_GL3} for definitions).  Our fundamental identity now will be
\begin{equation}\label{gl3id}
T_{\textnormal{diag}(1,p,p)}\circ T_{\textnormal{diag}(1,1,p)} - T_{\textnormal{diag}(1,p,p^2)} = (p^2+p+1) Id.
\end{equation}
We set $c_f(p)=a_f(p,1)$, $c_f(p)^\ast=\overline{a_f(p,1)}$ and $c_f(p^2)$ to be the eigenvalues of $p^{-1}T_{\textnormal{diag}(1,1,p)}=T_p$, $p^{-1}T_{\textnormal{diag}(1,p,p)}=T_p^\ast$ and $p^{-2}T_{\textnormal{diag}(1,p,p^2)}$ respectively when acting on a form $f$. See \eqref{eq_Hecke_def} and \eqref{eq_Hecke} for the precise definitions. We construct the amplifier
$$
A_f:=\left|\sum_\ell \alpha_\ell c_f (\ell) \right|^2
$$
with\footnote{One may also choose a variant, in the spirit of \cite{Ven}, which involves the signs of $c_{f_0}(\ell)$.}
\begin{equation*}
\alpha_{\ell}\coloneqq \begin{cases}
c_{f_0}(\ell)^\ast & \text{if $\ell \leq \sqrt{L}$ is a prime number,} \\
-1 & \text{if $\ell \leq L$ is the square of a prime number,} \\
0 & \text{otherwise}.
\end{cases}
\end{equation*}
As in the case of $GL(2)$, this amplifier will satisfy $A_{f_0} \gg_\varepsilon L^{1-\varepsilon}$ and $A_f$ is otherwise expected to be small for $f \neq f_0$. 
\par
Applying the inequality
\begin{equation}\label{ampineq3}
A_f \leqslant 2 \left|\sum_{p} \alpha_p c_f (p) \right|^2 + 2 \left|\sum_{p} \alpha_{p^2} c_f (p^2) \right|^2,
\end{equation}
one is reduced to understanding the actions of 
$$
T_{\textnormal{diag}(1,1,p)} \circ T_{\textnormal{diag}(1,q,q)} \textnormal{ and } T_{\textnormal{diag}(1,p,p^2)} \circ T_{\textnormal{diag}(1,q,q^2)} 
$$
both in the cases of primes $p=q$ and $p\neq q$ on the relevant automorphic kernel.  In the following sections, we compute the above compositions as linear combinations of other Hecke operators and state our main result as Theorem \ref{theo_A}.  In the end, we shall see that the following operators are the relevant ones for our application
$$
T_{\textnormal{diag}(1,p,pq)}, T_{\textnormal{diag}(1,pq,p^2q^2)}, T_{\textnormal{diag}(1,p^3,p^3)} \textnormal{ and } T_{\textnormal{diag}(1,1,p^3)}
$$
for primes $p=q$ and $p\neq q$.
\subsection{Statement of the results}%
\begin{theoint}\label{cor_A}
Let $p$ be a prime number and $\Lambda=GL_3(\Z)$.
\begin{itemize}
\item
The set $R_{1,1,p}$ (respectively $R_{1,p,p}$, $R_{1,p,p^2}$) defined in Proposition \ref{propo_dec_1_1_p} (respectively Proposition \ref{propo_dec_1_p_p}, Proposition \ref{propo_dec_1_p_p^2}) is a complete system of representatives for the distinct $\Lambda$-right cosets in the $\Lambda$-double coset of $\textnormal{diag}(1,1,p)$ (respectively $\textnormal{diag}(1,1,p)$, $\textnormal{diag}(1,p,p^2)$) modulo $\Lambda$.
\item
The following formulas for the degrees of $\Lambda$-double cosets hold.
\begin{eqnarray*}
\textnormal{deg}\left(\textnormal{diag}(1,1,p)\right) & = & p^2+p+1, \\
\textnormal{deg}\left(\textnormal{diag}(1,p,p)\right) & = & p^2+p+1, \\
\textnormal{deg}\left(\textnormal{diag}(1,p,p^2)\right) & = & p(p+1)(p^2+p+1), \\
\textnormal{deg}\left(\textnormal{diag}(p,p,p)\right) & = & 1, \\
\textnormal{deg}\left(\textnormal{diag}(1,p^2,p^4)\right) & = & p^5(p+1)(p^2+p+1), \\
\textnormal{deg}\left(\textnormal{diag}(1,p^3,p^3)\right) & = & p^4(p^2+p+1), \\
\textnormal{deg}\left(\textnormal{diag}(p,p,p^4)\right) & = & p^4(p^2+p+1), \\
\textnormal{deg}\left(\textnormal{diag}(p,p^2,p^3)\right) & = & p(p+1)(p^2+p+1), \\
\textnormal{deg}\left(\textnormal{diag}(p^2,p^2,p^2)\right) & = & 1.
\end{eqnarray*}
\item
Finally,
\begin{equation}\label{eq_intro_1}
\Lambda\textnormal{diag}(1,1,p)\Lambda\ast\Lambda\textnormal{diag}(1,p,p)\Lambda=\Lambda\textnormal{diag}(1,p,p^2)\Lambda+(p^2+p+1)\Lambda\textnormal{diag}(p,p,p)\Lambda.
\end{equation}
and
\begin{multline}\label{eq_intro_2}
\Lambda\textnormal{diag}(1,p,p^2)\Lambda\ast\Lambda\textnormal{diag}(1,p,p^2)\Lambda=\Lambda\textnormal{diag}(1,p^2,p^4)\Lambda+(p+1)\Lambda\textnormal{diag}(1,p^3,p^3)\Lambda \\
+(p+1)\Lambda\textnormal{diag}(p,p,p^4)\Lambda+(p+1)(2p-1)\Lambda\textnormal{diag}(p,p^2,p^3)\Lambda \\
+p(p+1)(p^2+p+1)\Lambda\textnormal{diag}(p^2,p^2,p^2)\Lambda.
\end{multline}
\end{itemize}
\end{theoint}
\begin{remark}
In \cite{MR0223304}, T.~Kodama explicitely computed the product of other double cosets in the slightly harder case of the Hecke ring for the symplectic group. The results stated in the previous theorem are similar in spirit.
\end{remark}
\begin{remark}
It is well-known that a $\Lambda$-double coset can be identified with its characteristic function $\chi$. Under this identification, the multiplication law between $\Lambda$-double cosets is the classical convolution between functions. If $\mu=(\mu_1,\mu_2,\mu_3)$ with $\mu_1\geq\mu_2\geq\mu_3\geq 0$ and $\nu=(\nu_1,\nu_2,\nu_3)$ with $\nu_1\geq\nu_2\geq\nu_3\geq 0$ are two partitions of length less than $n$, then
\begin{equation*}
\chi_{\Lambda\textnormal{diag}\left(p^{\mu_1},p^{\mu_2},p^{\mu_3}\right)\Lambda}\ast\chi_{\Lambda\textnormal{diag}\left(p^{\nu_1},p^{\nu_2},p^{\nu_3}\right)\Lambda}=\sum_{\lambda}g_{\mu,\nu}^\lambda(p)\chi_{\Lambda\textnormal{diag}\left(p^{\lambda_1},p^{\lambda_2},p^{\lambda_3}\right)\Lambda}
\end{equation*} 
for any prime number $p$ where $\lambda$ ranges over the partitions of length less than $n$ and the $g_{\mu,\nu}^\lambda(p)$ are the Hall polynomials (see \cite[Equation (2.6) Page 295]{MR1354144}). The sum on the right-hand side of the above equality is finite since only a finite number of the Hall polynomials are non-zero. However, determining which Hall polynomials vanish is not straightforward (see \cite[Equation (4.3) Page 188]{MR1354144}). Using Sage, one can check that
\begin{eqnarray*}
g_{(2,1,0),(2,1,0)}^{(4,2,0)}(p) & = & 1, \\
g_{(2,1,0),(2,1,0)}^{(3,3,0)}(p) & = & p+1, \\
g_{(2,1,0),(2,1,0)}^{(4,1,1)}(p) & = & p+1, \\
g_{(2,1,0),(2,1,0)}^{(3,2,1)}(p) & = & (p+1)(2p-1), \\
g_{(2,1,0),(2,1,0)}^{(2,2,2)}(p) & = & p(p+1)(p^2+p+1)
\end{eqnarray*}
and one can recover the coefficients occurring in \eqref{eq_intro_2}. We prefer to give a different proof, which has the advantage of producing explicit systems of representatives for the $\Lambda$-right cosets and formulas for the degrees.
\end{remark}
\begin{corint}\label{theo_A}
If $p$ and $q$ are two prime numbers then
\begin{equation}\label{eq_lin_2}
T_{\textnormal{diag}(1,p,p)} \circ T_{\textnormal{diag}(1,1,q)}  =  T_{\textnormal{diag}(1,p,pq)}+\delta_{p=q}(p^2+p+1)\text{Id}
\end{equation}
and
\begin{multline}\label{eq_lin_6}
T_{\textnormal{diag}(1,p,p^2)} \circ T_{\textnormal{diag}(1,q,q^2)} =T_{\textnormal{diag}(1,pq,p^2q^2)}
+\delta_{p=q}(p+1)\left(T_{\textnormal{diag}(1,p^3,p^3)}+T_{\textnormal{diag}(1,1,p^3)}\right) \\
+\delta_{p=q}(p+1)(2p-1)T_{\textnormal{diag}(1,p,p^2)}+\delta_{p=q}p(p+1)(p^2+p+1) Id.
\end{multline}
\end{corint}
When $p\neq q$, the previous corollary immediately follows from \eqref{eq_ex_multi} and \eqref{eq_ex_multi_2}. When $p=q$, it is a consequence of the previous theorem and of \eqref{eq_ex_multi}.
\begin{remark}
As observed by L.~Silberman and A.~Venkatesh in \cite{SiVe2} and used by V.~Blomer and P.~Maga in \cite{BlMa1} and in \cite{BlMa2}, the precise formulas for the Hall polynomials occurring in \eqref{eq_lin_2} and in \eqref{eq_lin_6} are not really needed for the purpose of the amplification method, since the Hall polynomials are easily well approximated for $p$ and $q$ large by the much easier Schur polynomials. Nevertheless, the precise list of the Hecke operators relevant for the amplification method, namely occurring in \eqref{eq_lin_2} and in \eqref{eq_lin_6}, seems to be crucial in order to obtain the best possible explicit result. For instance, G.~Harcos and N.~Templier used such a list in order to prove the best known subconvexity exponent for the sup-norm of $GL(2)$ automorphic forms in the level aspect in \cite{MR3038127}.
\end{remark}
\subsection{Organization of the paper}%
The general background on $GL(3)$ Maa$\ss$ cusp forms and on the $GL(3)$ Hecke algebra is given in Section \ref{sec_GL3}. The linearizations involved in Theorem \ref{cor_A} are detailed in Section \ref{sec_proof_eq}.  The proof requires decompositions of $\Lambda$-double cosets into $\Lambda$-left and right cosets and computations of degrees as done in Appendix \ref{sec_appendix}.  
\begin{notations}
$\Lambda$ stands for the group $GL_3(\Z)$ of \;$3\times3$ invertible matrices with integer coefficients. If $g$ is a \;$3\times3$ matrix with real coefficients then $\mathstrut^tg$ stands for its transpose. For $g \in GL_3(\Q)$ we let $T_g$ denote the Hecke operator associated to $g$ (see \S \ref{sec_GL3}). If $a$, $b$ and $c$ are three rational numbers then
\begin{itemize}
\item
$\textnormal{diag}(a,b,c)$ denotes the diagonal \;$3\times3$ matrix with $a$, $b$ and $c$ as diagonal coefficients;
\item
$L_{a,b,c}$ (respectively $R_{a,b,c}$) stands for a system of representatives for the decomposition of the $\Lambda$-double coset $\Lambda\textnormal{diag}(a,b,c)\Lambda$ into $\Lambda$-left (respectively right) cosets.
\end{itemize}
\end{notations}
\begin{merci}%
The authors would like to thank the referees for a very careful reading of the manuscript.
\par
They also want to thank J.~Cogdell, \'{E}.~Fouvry, E.~Kowalski and P.~Michel for fruitful discussions related to this work.
\par
This article was worked out at several places: Universit\'e Blaise Pascal (Clermont-Ferrand), the Department of Mathematics of The Ohio State University (Columbus), \'{E}cole Polytechnique F\'ed\'erale de Lausanne, Eidgenossische Technische Hochschule (Zurich), Forschungsinstitut fur Mathematik (Zurich). The authors would like to thank all of these institutions for their hospitality and inspiring working conditions.
\par
The research of R.~Holowinsky was partially supported by the Sloan fellowship BR2011-083 and the NSF grant DMS-1068043.  In-person collaboration with the other authors was made possible through this funding.
\par
The research of G.~Ricotta was partially supported by a Marie Curie Intra European Fellowship within the 7th European Community Framework Programme. The grant agreement number of this project, whose acronym is ANERAUTOHI, is PIEF-GA-2009-25271. He would like to thank ETH and its entire staff for the excellent working conditions. Last but not least, the second author would like to express his gratitude to K.~Belabas for his crucial but isolated support for Analytic Number Theory among the Number Theory research team A2X (Institut de Math\'ematiques de Bordeaux, Universit\'e de Bordeaux).
\end{merci}
\section{Background on the $GL(3)$ Hecke algebra}\label{sec_GL3}%
Convenient references for this section include \cite{MR1349824}, \cite{MR2254662}, \cite{MR0340283} and \cite{MR1291394}.
\par
Let $f$ be a $GL(3)$ Maa$\ss$ cusp form of full level. Such $f$ admits a Fourier expansion
\begin{equation}\label{eq_fourier}
f(g)=\sum_{\gamma\in U_2(\Z)\setminus SL_2(\Z)}\sum_{\substack{m_1\geq 1 \\
m_2\in\Z\setminus\{0\}}}\frac{a_f(m_1,m_2)}{m_1\abs{m_2}}W_{\text{Ja}}\left(\begin{pmatrix}
m_1\abs{m_2} & & \\
& m_1 & \\
& & 1
\end{pmatrix}\begin{pmatrix}
\gamma & \\
& 1
\end{pmatrix}g,\nu_f,\psi_{1,\frac{m_2}{\abs{m_2}}}\right)
\end{equation}
for $g\in GL_3(\R)$ (see \cite[Equation (6.2.1)]{MR2254662}).  Here $U_2(\Z)$ stands for the $\Z$-points of the group of upper-triangular unipotent $2\times 2$ matrices. $\nu_f\in\C^2$ is the type of $f$, whose components are complex numbers characterized by the property that, for every invariant differential operator $D$ in the center of the universal enveloping algebra of $GL_3(\R)$, the cusp form $f$ is an eigenfunction of $D$ with the same eigenvalue as the power function $I_{\nu_f}$, which is defined in \cite[Equation (5.1.1)]{MR2254662}. $\psi_{1,\pm 1}$ is the character of the group of upper-triangular unipotent real $3\times 3$ matrices defined by
\begin{equation*}
\psi_{1,\pm 1}\left(\begin{pmatrix}
1 & u_{1,2} & u_{1,3} \\
& 1 & u_{2,3} \\
& & 1
\end{pmatrix}\right)=e^{2i\pi\left(u_{2,3}\pm u_{1,2}\right)}.
\end{equation*}
$W_{Ja}(\ast,\nu_f,\psi_{1,\pm 1})$ stands for the $GL(3)$ Jacquet Whittaker function of type $\nu_f$ and character $\psi_{1,\pm 1}$ defined in \cite[Equation 6.1.2]{MR2254662}. The complex number $a_f(m_1,m_2)$ is the $(m_1,m_2)$-th \emph{Fourier coefficient} of $f$ for $m_1$ a positive integer and $m_2$ a non-vanishing integer.
\par
For $g\in GL_3(\Q)$, the \emph{Hecke operator} $T_g$ is defined by
\begin{equation*}
T_g(f)(h)=\sum_{\delta\in\Lambda\setminus\Lambda g\Lambda}f(\delta h)
\end{equation*}
for $h\in GL_3(\R)$ (see \cite[Chapter 3, Sections 1.1 and 1.5]{MR1349824}). The \emph{degree} of $g$ or $T_g$ defined by
\begin{equation*}
\textnormal{deg}(g)=\textnormal{deg}(T_g)=\text{card}\left(\Lambda\setminus\Lambda g\Lambda\right)
\end{equation*}
is scaling invariant, in the sense that
\begin{equation}\label{eq_deg_scaling}
\textnormal{deg}(rg)=\textnormal{deg}(g)
\end{equation}
for $r\in\mathbb{Q}^\times$. The adjoint of $T_g$ for the Petersson inner product is $T_{g^{-1}}$. The algebra of Hecke operators $\mathbb{T}$ is the ring of endomorphisms generated by all the $T_g$'s with $g\in GL_3(\Q)$, a commutative algebra of normal endomorphisms (see \cite[Theorem 6.4.6]{MR2254662}), which contains the $m$-th normalized Hecke operator
\begin{equation}\label{eq_Hecke_def}
T_m=\frac{1}{m}\sum_{\substack{g=\textnormal{diag}(y_1,y_2,y_3) \\
y_1\mid y_2\mid y_3\\
y_1y_2y_3=m}}T_g
\end{equation}
for all positive integers $m$. A \emph{Hecke-Maa$\ss$ cusp form} $f$ of full level is a Maa$\ss$ cusp form of full level, which is an eigenfunction of $\mathbb{T}$. In particular, it satisfies
\begin{equation}\label{eq_Hecke}
T_m(f)=a_f(m,1)f \text{ and } T_m^\ast(f)=a_f(1,m)f=\overline{a_f(m,1)}f
\end{equation}
according to \cite[Theorem 6.4.11]{MR2254662}.
\par
The algebra $\mathbb{T}$ is isomorphic to the \emph{absolute Hecke algebra}, the free $\mathbb{Z}$-module generated by the double cosets $\Lambda g\Lambda$ where $g$ ranges over $\Lambda\setminus GL_3(\Q)/\Lambda$ and endowed with the following multiplication law. If $g_1$ and $g_2$ belong to $GL_3(\Q)$ and
\begin{equation*}
\Lambda g_1\Lambda=\bigcup_{i=1}^{\textnormal{deg}(g_1)}\Lambda\alpha_i \text{ and } \Lambda g_2\Lambda=\bigcup_{j=1}^{\textnormal{deg}(g_2)}\Lambda\beta_j 
\end{equation*}
then
\begin{equation}\label{eq_product}
\Lambda g_1\Lambda\ast\Lambda g_2\Lambda=\sum_{\Lambda h\Lambda\subset\Lambda g_1\Lambda g_2\Lambda}m(g_1,g_2;h)\Lambda h\Lambda
\end{equation}
where $h\in GL_3(\Q)$ ranges over a system of representatives of the $\Lambda$-double cosets contained in the set $\Lambda g_1\Lambda g_2\Lambda$ and
\begin{align}\label{eq_multi}
m(g_1,g_2;h) & =\text{card}\left(\left\{(i,j)\in\{1,\dots,\textnormal{deg}(g_1)\}\times\{1,\dots,\textnormal{deg}(g_2)\}, \alpha_i\beta_j\in\Lambda h\right\}\right), \\
& =\frac{1}{\textnormal{deg}(h)}\text{card}\left(\left\{(i,j)\in\{1,\dots,\textnormal{deg}(g_1)\}\times\{1,\dots,\textnormal{deg}(g_2)\}, \alpha_i\beta_j\in\Lambda h\Lambda\right\}\right), \\
& =\frac{\textnormal{deg}(g_2)}{\textnormal{deg}(h)}\text{card}\left(\left\{i\in\{1,\dots,\textnormal{deg}(g_1)\}, \alpha_ig_2\in\Lambda h\Lambda\right\}\right).
\end{align}
Confer \cite[Lemma 1.5 Page 96]{MR1349824}. In particular,
\begin{equation}\label{eq_ex_multi}
\Lambda\textnormal{diag}(r,r,r)\Lambda\ast\Lambda g\Lambda=\Lambda rg\Lambda
\end{equation}
for $g\in GL_3(\Q)$ and $r\in\Q^\times$ (\cite[Lemma 2.4 Page 107]{MR1349824}). In addition, for $p$ and $q$ two distinct prime numbers,
\begin{equation}\label{eq_ex_multi_2}
\Lambda\textnormal{diag}(1,p^{\alpha_1},p^{\alpha_2})\Lambda\ast\Lambda\textnormal{diag}(1,q^{\beta_1},q^{\beta_2})\Lambda=\Lambda\textnormal{diag}(1,p^{\alpha_1}q^{\beta_1},p^{\alpha_2}q^{\beta_2})\Lambda
\end{equation}
where $\alpha_1, \alpha_2, \beta_1,\beta_2$ are non-negative integers by \cite[Proposition 2.5 Page 107]{MR1349824}.
\par
Every double coset $\Lambda g\Lambda$ with $g$ in $GL_3(\Q)$ contains a unique representative of the form
\begin{equation}\label{eq_smith}
r\textnormal{diag}(1,s_1(g),s_2(g))
\end{equation}
where $r\in\Q^\ast$ and $s_1(g), s_2(g)$ are some positive integers satisfying $s_1(g)\mid s_2(g)$ (see \cite[Lemma 2.2]{MR1349824}).
\par
Finally, let $g=[g_{i,j}]_{1\leq i,j\leq 3}$ be a $3\times 3$ matrix with integer coefficients. Its \emph{determinantal divisors} are the non-negative integers given by
\begin{eqnarray*}
d_1(g) & = & \text{gcd}(\{g_{i,j}, 1\leq i,j\leq 3\}), \\
d_2(g) & = & \text{gcd}(\{\text{determinants of $2\times 2$ submatrices of $g$}\}), \\
d_3(g) & = & \abs{\text{det}(g)}.
\end{eqnarray*}
and its \emph{determinantal vector} is $\uple{d}(g)=\left(d_1(g),d_2(g),d_3(g)\right)$. The determinantal divisors turn out to be useful since if $h$ is another $3\times 3$ matrix with integer coefficients then $h$ belongs to $\Lambda g\Lambda$ if and only if $d_k(h)=d_k(g)$ for $1\leq k\leq 3$ (see \cite{MR0340283}).
\section{Proof of the linearizations given in Theorem \ref{cor_A}}\label{sec_proof_eq}%
\subsection{Linearization of $\Lambda\textnormal{diag}(1,1,p)\Lambda\ast\Lambda\textnormal{diag}(1,p,p)\Lambda$}%
This section contains the proof of \eqref{eq_intro_1}.
\par
By \eqref{eq_product}, the product of these double cosets equals
\begin{equation*}
\sum_{\Lambda h\Lambda\subset\Lambda\textnormal{diag}(1,1,p)\Lambda\textnormal{diag}(1,p,p)\Lambda}m\left(\begin{pmatrix}
1 & & \\
& 1 & \\
& & p
\end{pmatrix},\begin{pmatrix}
1 & & \\
& p & \\
& & p
\end{pmatrix};h\right)\Lambda h\Lambda
\end{equation*}
where $h\in GL_3(\Q)$ ranges over a system of representatives of the $\Lambda$-double cosets contained in the set
\begin{equation*}
\Lambda\begin{pmatrix}
1 & & \\
& 1 & \\
& & p
\end{pmatrix}\Lambda\begin{pmatrix}
1 & & \\
& p & \\
& & p
\end{pmatrix}\Lambda.
\end{equation*}
\par
Let us determine the matrices $h$ occuring in this sum. Let $h$ in $GL_3(\Q)$ be such that $\Lambda h\Lambda$ is included in the previous set. By \eqref{eq_smith}, one has uniquely
\begin{equation*}
\Lambda h\Lambda=\Lambda\epsilon\frac{\lambda_1}{\lambda_2}\textnormal{diag}(1,s_1,s_2)\Lambda
\end{equation*}
with $\epsilon=\pm 1$, $\lambda_1, \lambda_2>0$, $(\lambda_1,\lambda_2)=1$, $s_1, s_2>0$, $s_1\mid s_2$. The inclusion is equivalent to
\begin{equation*}
\Lambda\epsilon\lambda_1\textnormal{diag}(1,s_1,s_2)\Lambda=\Lambda\lambda_2\delta_1\delta_2\Lambda
\end{equation*}
for some matrices $\delta_1\in R_{1,1,p}$ and $\delta_2\in L_{1,p,p}$ by \eqref{eq_dec_1_1_p} and \eqref{eq_dec_1_p_p}. So, both matrices have the same determinantal divisors ie
\begin{eqnarray*}
\epsilon\lambda_1 & = & \lambda_2d_1(\delta_1\delta_2), \\
\lambda_1^2s_1 & = & \lambda_2^2d_2(\delta_1\delta_2), \\
\epsilon\lambda_1^3s_1s_2 & = & \lambda_2^3d_3(\delta_1\delta_2)=\lambda_2^3p^3.
\end{eqnarray*}
One can check that the set
\begin{equation*}
\left\{\delta_1\delta_2, (\delta_1,\delta_2)\in R_{1,1,p}\times L_{1,p,p}\right\}
\end{equation*}
is made exactly of the matrices
\begin{eqnarray*}
\begin{pmatrix}
p & & \\
 & p & \\
& & p
\end{pmatrix} & \rightsquigarrow & \uple{d}(\delta_1\delta_2)=(p,p^2,p^3), \\
\begin{pmatrix}
p & d_1+D_1 &  \\
 & p &  \\
& & p
\end{pmatrix} & \rightsquigarrow & \uple{d}(\delta_1\delta_2)=((p,d_1+D_1),p(p,d_1+D_1),p^3), \\
\begin{pmatrix}
p &  & e_1+E_1 \\
 & p & f_1+F_1 \\
& & p
\end{pmatrix} & \rightsquigarrow & \uple{d}(\delta_1\delta_2)=((p,e_1+E_1,f_1+F_1),p(p,e_1+E_1,f_1+F_1),p^3)
\end{eqnarray*}
and
\begin{eqnarray*}
\begin{pmatrix}
p^2 &  & pE_1 \\
 & p & F_1 \\
& & 1
\end{pmatrix} & \rightsquigarrow & \uple{d}(\delta_1\delta_2)=(1,p,p^3), \\
\begin{pmatrix}
p^2 & pD_1 & \\
 & 1 &  \\
& & p
\end{pmatrix} & \rightsquigarrow & \uple{d}(\delta_1\delta_2)=(1,p,p^3), \\
\begin{pmatrix}
1 & pd_1 &  \\
 & p^2 & \\
& & p
\end{pmatrix} & \rightsquigarrow & \uple{d}(\delta_1\delta_2)=(1,p,p^3), \\
\begin{pmatrix}
p & pd_1 & d_1F_1+E_1\\
 & p^2 & pF_1 \\
& & 1
\end{pmatrix} & \rightsquigarrow & \uple{d}(\delta_1\delta_2)=(1,p,p^3), \\
\begin{pmatrix}
1 & & pe_1 \\
 & p & pf_1 \\
& & p^2
\end{pmatrix} & \rightsquigarrow & \uple{d}(\delta_1\delta_2)=(1,p,p^3), \\
\begin{pmatrix}
p & d_2 & pe1 \\
 & 1 & pf_1 \\
& & p^2
\end{pmatrix} & \rightsquigarrow & \uple{d}(\delta_1\delta_2)=(1,p,p^3)
\end{eqnarray*}
with $0\leq d_1, e_1, f_1, D_1, E_1, F_1<p$. As a consequence, only two cases can occur since
\begin{equation*}
\uple{d}(\delta_1\delta_2)\in\{(1,p,p^3),(p,p^2,p^3)\}.
\end{equation*}
\noindent{\underline{First case}}: $\uple{d}(\delta_1\delta_2)=(1,p,p^3)$.
\begin{eqnarray*}
\epsilon\lambda_1 & = & \lambda_2, \\
\lambda_1^2s_1 & = & \lambda_2^2p, \\
\epsilon\lambda_1^3s_1s_2 & = & \lambda_2^3p^3.
\end{eqnarray*}
The first equation gives $\epsilon=\lambda_1=\lambda_2=1$ by the coprimality of $\lambda_1$ and $\lambda_2$. The second equation gives $s_1=p$. The third equation gives $s_2=p^2$. Thus,
\begin{equation*}
\Lambda h\Lambda=\Lambda\textnormal{diag}(1,p,p^2)\Lambda.
\end{equation*}
\noindent{\underline{Second case}}: $\uple{d}(\delta_1\delta_2)=(p,p^2,p^3)$.
\begin{eqnarray*}
\epsilon\lambda_1 & = & \lambda_2p, \\
\lambda_1^2s_1 & = & \lambda_2^2p^2, \\
\epsilon\lambda_1^3s_1s_2 & = & \lambda_2^3p^3.
\end{eqnarray*}
The first equation gives $\epsilon=\lambda_2=1$ and $\lambda_1=p$ by the coprimality of $\lambda_1$ and $\lambda_2$. The second equation gives $s_1=1$. The third equation gives $s_2=1$. Thus,
\begin{equation*}
\Lambda h\Lambda=\Lambda\textnormal{diag}(p,p,p)\Lambda.
\end{equation*}
As a consequence,
\begin{equation*}
\Lambda\textnormal{diag}(1,1,p)\Lambda\ast\Lambda\textnormal{diag}(1,p,p)\Lambda=m_1\Lambda\begin{pmatrix}
1 & & \\
& p & \\
& & p^2
\end{pmatrix}\Lambda+m_2\Lambda\begin{pmatrix}
p & & \\
& p & \\
& & p
\end{pmatrix}\Lambda
\end{equation*}
where
\begin{eqnarray*}
m_1 & \coloneqq & m\left(\begin{pmatrix}
1 & & \\
& 1 & \\
& & p
\end{pmatrix},\begin{pmatrix}
1 & & \\
& p & \\
& & p
\end{pmatrix};\begin{pmatrix}
1 & & \\
& p & \\
& & p^2
\end{pmatrix}\right), \\
m_2 & \coloneqq & m\left(\begin{pmatrix}
1 & & \\
& 1 & \\
& & p
\end{pmatrix},\begin{pmatrix}
1 & & \\
& p & \\
& & p
\end{pmatrix};\begin{pmatrix}
p & & \\
& p & \\
& & p
\end{pmatrix}\right).
\end{eqnarray*}
\par
Let us compute the value of $m_2$ first. By \eqref{eq_multi}, \eqref{eq_deg_1_p_p} and \eqref{eq_deg_scaling},
\begin{equation*}
m_2=(p^2+p+1)\left\vert\left\{\delta_1\in R_{1,1,p}, \delta_1\begin{pmatrix}
1 & & \\
& p & \\
& & p
\end{pmatrix}\in\Lambda\begin{pmatrix}
p & & \\
& p & \\
& & p
\end{pmatrix}\Lambda\right\}\right\vert.
\end{equation*}
Let us compute the remaining cardinality. One can check that the set
\begin{equation*}
\left\{\delta_1\begin{pmatrix}
1 & & \\
& p & \\
& & p
\end{pmatrix}, \delta_1\in R_{1,1,p}\right\}
\end{equation*}
is exactly made of the matrices
\begin{eqnarray*}
\begin{pmatrix}
p & & \\
& p & \\
& & p
\end{pmatrix} & \rightsquigarrow & (d_1,d_2,d_3)=(p,p^2,p^3), \\
\begin{pmatrix}
1 & pd_1 & \\
& p^2 & \\
& & p
\end{pmatrix} & \rightsquigarrow & (d_1,d_2,d_3)=(1,p,p^3), \\
\begin{pmatrix}
1 & 0 & pe_1 \\
& p & pf_1 \\
& & p^2
\end{pmatrix} & \rightsquigarrow & (d_1,d_2,d_3)=(1,p,p^3)
\end{eqnarray*}
with $0\leq d_1, e_1, f_1<p$. The fact that the determinantal vector of $\textnormal{diag}(p,p,p)$ is $\textnormal{diag}(p,p^2,p^3)$ implies that
\begin{equation*}
\left\{\delta_1\in R_{1,1,p}, \delta_1\begin{pmatrix}
1 & & \\
& p & \\
& & p
\end{pmatrix}\in\Lambda\begin{pmatrix}
p & & \\
& p & \\
& & p
\end{pmatrix}\Lambda\right\}=\left\{\begin{pmatrix}
p & & \\
& p & \\
& & p
\end{pmatrix}\right\}
\end{equation*}
and is of cardinality $1$ such that $m_2=p^2+p+1$.
\par
Now, let us compute the value of $m_1$. By \eqref{eq_multi}, \eqref{eq_deg_1_p_p} and \eqref{eq_deg_1_p_p^2},
\begin{equation*}
m_1=\frac{1}{p(p+1)}\left\vert\left\{\delta_1\in R_{1,1,p}, \delta_1\begin{pmatrix}
1 & & \\
& p & \\
& & p
\end{pmatrix}\in\Lambda\begin{pmatrix}
1 & & \\
& p & \\
& & p^2
\end{pmatrix}\Lambda\right\}\right\vert.
\end{equation*}
Let us compute the remaining cardinality. Both the analysis done for $m_2$ and the fact that the determinantal vector of $\textnormal{diag}(1,p,p^2)$ is $(1,p,p^2)$ imply that 
\begin{multline*}
\left\{\delta_1\in R_{1,1,p}, \delta_1\begin{pmatrix}
1 & & \\
& p & \\
& & p
\end{pmatrix}\in\Lambda\begin{pmatrix}
1 & & \\
& p & \\
& & p^2
\end{pmatrix}\Lambda\right\}=\bigcup_{0\leq d_1<p}\left\{\begin{pmatrix}
1 &  d_1 & \\
& p & \\
& & 1
\end{pmatrix}\right\} \\
\bigcup_{0\leq e_1, f_1<p}\left\{\begin{pmatrix}
1 &   & e_1 \\
& 1 & f_1 \\
& & p
\end{pmatrix}\right\}
\end{multline*}
which is of cardinality $p(p+1)$ such that $m_1=1$.
\subsection{Linearization of $\Lambda\textnormal{diag}(1,p,p^2)\Lambda\ast\Lambda\textnormal{diag}(1,p,p^2)\Lambda$}%
This section contains the proof of \eqref{eq_intro_2}.
\par
By \eqref{eq_product}, the product of these double cosets equals
\begin{equation*}
\sum_{\Lambda h\Lambda\subset\Lambda\textnormal{diag}(1,p,p^2)\Lambda \textnormal{diag}(1,p,p^2)\Lambda}m\left(\begin{pmatrix}
1 & & \\
& p & \\
& & p^2
\end{pmatrix},\begin{pmatrix}
1 & & \\
& p & \\
& & p^2
\end{pmatrix};h\right)\Lambda h\Lambda
\end{equation*}
where $h\in GL_3(\Q)$ ranges over a system of representatives of the $\Lambda$-double cosets contained in the set
\begin{equation*}
\Lambda\begin{pmatrix}
1 & & \\
& p & \\
& & p^2
\end{pmatrix}\Lambda\begin{pmatrix}
1 & & \\
& p & \\
& & p^2
\end{pmatrix}\Lambda.
\end{equation*}
\par
Let us determine the relevant matrices $h$ occuring in this sum. Let $h$ in $GL_3(\Q)$ be such that $\Lambda h\Lambda$ is included in the previous set. By \eqref{eq_smith}, one has uniquely
\begin{equation*}
\Lambda h\Lambda=\Lambda\epsilon\frac{\lambda_1}{\lambda_2}\textnormal{diag}(1,s_1,s_2)\Lambda
\end{equation*}
with $\epsilon=\pm 1$, $\lambda_1, \lambda_2>0$, $(\lambda_1,\lambda_2)=1$, $s_1, s_2>0$, $s_1\mid s_2$. The inclusion is equivalent to
\begin{equation*}
\Lambda\epsilon\lambda_1\textnormal{diag}(1,s_1,s_2)\Lambda=\Lambda\lambda_2\delta_1\delta_2\Lambda
\end{equation*}
for some matrices $\delta_1\in R_{1,p,p^2}$ and $\delta_2\in L_{1,p,p^2}$ by \eqref{eq_dec_1_p_p^2}. So, both matrices have the same determinantal divisors ie
\begin{eqnarray*}
\epsilon\lambda_1 & = & \lambda_2d_1(\delta_1\delta_2), \\
\lambda_1^2s_1 & = & \lambda_2^2d_2(\delta_1\delta_2), \\
\epsilon\lambda_1^3s_1s_2 & = & \lambda_2^3d_3(\delta_1\delta_2)=\lambda_2^3p^6.
\end{eqnarray*}
By \eqref{eq_dec_1_p_p^2}, a straightforward but tedious computation ensures that the set
\begin{equation*}
\left\{\uple{d}(\delta_1\delta_2), (\delta_1,\delta_2)\in R_{1,p,p^2}\times L_{1,p,p^2}\right\}
\end{equation*}
is a subset of
\begin{equation*}
\{(1,p^2,p^6),(1,p^3,p^6),(p,p^2,p^6),(p,p^3,p^6),(p^2,p^4,p^6)\}.
\end{equation*}
\noindent{\underline{Case 1}}: $(d_1,d_2,d_3)=(1,p^2,p^6)$.
\begin{eqnarray*}
\epsilon\lambda_1 & = & \lambda_2, \\
\lambda_1^2s_1 & = & \lambda_2^2p^2, \\
\epsilon\lambda_1^3s_1s_2 & = & \lambda_2^3p^6.
\end{eqnarray*}
The first equation gives $\epsilon=\lambda_1=\lambda_2=1$ by the coprimality of $\lambda_1$ and $\lambda_2$. The second equation gives $s_1=p^2$. The third equation gives $s_2=p^4$. Thus,
\begin{equation*}
\Lambda h\Lambda=\Lambda\textnormal{diag}(1,p^2,p^4)\Lambda.
\end{equation*}
\noindent{\underline{Case 2}}: $(d_1,d_2,d_3)=(1,p^3,p^6)$.
\begin{eqnarray*}
\epsilon\lambda_1 & = & \lambda_2, \\
\lambda_1^2s_1 & = & \lambda_2^2p^3, \\
\epsilon\lambda_1^3s_1s_2 & = & \lambda_2^3p^6.
\end{eqnarray*}
The first equation gives $\epsilon=\lambda_1=\lambda_2=1$ by the coprimality of $\lambda_1$ and $\lambda_2$. The second equation gives $s_1=p^3$. The third equation gives $s_2=p^3$. Thus,
\begin{equation*}
\Lambda h\Lambda=\Lambda\textnormal{diag}(1,p^3,p^3)\Lambda.
\end{equation*}
\noindent{\underline{Case 3}}: $(d_1,d_2,d_3)=(p,p^2,p^6)$.
\begin{eqnarray*}
\epsilon\lambda_1 & = & \lambda_2p, \\
\lambda_1^2s_1 & = & \lambda_2^2p^2, \\
\epsilon\lambda_1^3s_1s_2 & = & \lambda_2^3p^6.
\end{eqnarray*}
The first equation gives $\epsilon=\lambda_2=1$ and $\lambda_1=p$ by the coprimality of $\lambda_1$ and $\lambda_2$. The second equation gives $s_1=1$. The third equation gives $s_2=p^3$. Thus,
\begin{equation*}
\Lambda h\Lambda=\Lambda\textnormal{diag}(p,p,p^4)\Lambda.
\end{equation*}
\noindent{\underline{Case 4}}: $(d_1,d_2,d_3)=(p,p^3,p^6)$.
\begin{eqnarray*}
\epsilon\lambda_1 & = & \lambda_2p, \\
\lambda_1^2s_1 & = & \lambda_2^2p^3, \\
\epsilon\lambda_1^3s_1s_2 & = & \lambda_2^3p^6.
\end{eqnarray*}
The first equation gives $\epsilon=\lambda_2=1$ and $\lambda_1=p$ by the coprimality of $\lambda_1$ and $\lambda_2$. The second equation gives $s_1=p$. The third equation gives $s_2=p^2$. Thus,
\begin{equation*}
\Lambda h\Lambda=\Lambda\textnormal{diag}(p,p^2,p^3)\Lambda.
\end{equation*}
\noindent{\underline{Case 5}}: $(d_1,d_2,d_3)=(p^2,p^4,p^6)$.
\begin{eqnarray*}
\epsilon\lambda_1 & = & \lambda_2p^2, \\
\lambda_1^2s_1 & = & \lambda_2^2p^4, \\
\epsilon\lambda_1^3s_1s_2 & = & \lambda_2^3p^6.
\end{eqnarray*}
The first equation gives $\epsilon=\lambda_2=1$ and $\lambda_1=p^2$ by the coprimality of $\lambda_1$ and $\lambda_2$. The second equation gives $s_1=1$. The third equation gives $s_2=1$. Thus,
\begin{equation*}
\Lambda h\Lambda=\Lambda\textnormal{diag}(p^2,p^2,p^2)\Lambda.
\end{equation*}
As a consequence,
\begin{multline*}
\Lambda\textnormal{diag}(1,p,p^2)\Lambda\ast\Lambda\textnormal{diag}(1,p,p^2)\Lambda=m_1\Lambda\begin{pmatrix}
1 & & \\
& p^2 & \\
& & p^4
\end{pmatrix}\Lambda+m_2\Lambda\begin{pmatrix}
1 & & \\
& p^3 & \\
& & p^3
\end{pmatrix}\Lambda \\
+m_3\Lambda\begin{pmatrix}
p & & \\
& p & \\
& & p^4
\end{pmatrix}\Lambda+m_4\Lambda\begin{pmatrix}
p & & \\
& p^2 & \\
& & p^3
\end{pmatrix}\Lambda+m_5\Lambda\begin{pmatrix}
p^2 & & \\
& p^2 & \\
& & p^2
\end{pmatrix}\Lambda
\end{multline*}
where
\begin{eqnarray*}
m_1 & \coloneqq & m\left(\begin{pmatrix}
1 & & \\
& p & \\
& & p^2
\end{pmatrix},\begin{pmatrix}
1 & & \\
& p & \\
& & p^2
\end{pmatrix};\begin{pmatrix}
1 & & \\
& p^2 & \\
& & p^4
\end{pmatrix}\right), \\
m_2 & \coloneqq & m\left(\begin{pmatrix}
1 & & \\
& p & \\
& & p^2
\end{pmatrix},\begin{pmatrix}
1 & & \\
& p & \\
& & p^2
\end{pmatrix};\begin{pmatrix}
1 & & \\
& p^3 & \\
& & p^3
\end{pmatrix}\right), \\
m_3 & \coloneqq & m\left(\begin{pmatrix}
1 & & \\
& p & \\
& & p^2
\end{pmatrix},\begin{pmatrix}
1 & & \\
& p & \\
& & p^2
\end{pmatrix};\begin{pmatrix}
p & & \\
& p & \\
& & p^4
\end{pmatrix}\right), \\
m_4 & \coloneqq & m\left(\begin{pmatrix}
1 & & \\
& p & \\
& & p^2
\end{pmatrix},\begin{pmatrix}
1 & & \\
& p & \\
& & p^2
\end{pmatrix};\begin{pmatrix}
p & & \\
& p^2 & \\
& & p^3
\end{pmatrix}\right), \\
m_5 & \coloneqq & m\left(\begin{pmatrix}
1 & & \\
& p & \\
& & p^2
\end{pmatrix},\begin{pmatrix}
1 & & \\
& p & \\
& & p^2
\end{pmatrix};\begin{pmatrix}
p^2 & & \\
& p^2 & \\
& & p^2
\end{pmatrix}\right).
\end{eqnarray*}
\par
Let us compute the value of $m_1$. By \eqref{eq_multi}, \eqref{eq_deg_1_p_p^2} and \eqref{eq_deg_1_p^2_p^4},
\begin{equation*}
m_1=\frac{1}{p^4}\left\vert\left\{\delta_1\in R_{1,p,p^2}, \delta_1\begin{pmatrix}
1 & & \\
& p & \\
& & p^2
\end{pmatrix}\in\Lambda\begin{pmatrix}
1 & & \\
& p^2 & \\
& & p^4
\end{pmatrix}\Lambda\right\}\right\vert.
\end{equation*}
Let us compute the remaining cardinality. One can check that the set
\begin{equation*}
\left\{\delta_1\begin{pmatrix}
1 & & \\
& p & \\
& & p^2
\end{pmatrix}, \delta_1\in R_{1,p,p^2}\right\}
\end{equation*}
is exactly made of the matrices
\begin{eqnarray*}
\begin{pmatrix}
p^2 &  &  \\
& p & p^2f_1 \\
& & p^3
\end{pmatrix} & \rightsquigarrow & (d_1,d_2,d_3)=(p,p^3,p^6), \\
\begin{pmatrix}
p & pd_2 &  \\
& p^3 &  \\
& & p^2
\end{pmatrix} (p\mid d_2) & \rightsquigarrow & (d_1,d_2,d_3)=(p,p^3,p^6), \\
\begin{pmatrix}
p & pd_1 & p^2e_1 \\
& p^2 & p^2f_1 \\
& & p^3
\end{pmatrix} (d_1f_1=0, (d_1,e_1,f_1)\neq(0,0,0)) & \rightsquigarrow & (d_1,d_2,d_3)=(p,p^3,p^6)
\end{eqnarray*}
and
\begin{eqnarray*}
\begin{pmatrix}
1 & pd_1 & p^2e_2 \\
& p^2 & p^2f_2 \\
& & p^4
\end{pmatrix} (p\mid f_2) & \rightsquigarrow & (d_1,d_2,d_3)=(1,p^2,p^6)
\end{eqnarray*}
and
\begin{eqnarray*}
\begin{pmatrix}
1 & pd_2 & p^2e_1 \\
& p^3 &  \\
& & p^3
\end{pmatrix} & \rightsquigarrow & (d_1,d_2,d_3)=(1,p^3,p^6)
\end{eqnarray*}
and
\begin{eqnarray*}
\begin{pmatrix}
p &  & p^2e_2 \\
& p & p^2f_2 \\
& & p^4
\end{pmatrix} (p\mid e_2) & \rightsquigarrow & (d_1,d_2,d_3)=(p,p^2,p^6)
\end{eqnarray*}
and
\begin{eqnarray*}
\begin{pmatrix}
p^2 &  &  \\
& p^2 &  \\
& & p^2
\end{pmatrix} & \rightsquigarrow & (d_1,d_2,d_3)=(p^2,p^4,p^6)
\end{eqnarray*}
where $0\leq d_1, e_1, f_1<p$ and $0\leq d_2, e_2, f_2<p^2$. The fact that the determinantal vector of $\textnormal{diag}(1,p^2,p^4)$ is $(1,p^2,p^6)$ implies that $m_1=1$.
\par
Let us compute the value of $m_2$. By \eqref{eq_multi}, \eqref{eq_deg_1_p_p^2} and \eqref{eq_deg_1_p^3_p^3},
\begin{equation*}
m_2=\frac{p+1}{p^3}\left\vert\left\{\delta_1\in R_{1,p,p^2}, \delta_1\begin{pmatrix}
1 & & \\
& p & \\
& & p^2
\end{pmatrix}\in\Lambda\begin{pmatrix}
1 & & \\
& p^3 & \\
& & p^3
\end{pmatrix}\Lambda\right\}\right\vert.
\end{equation*}
Both the analysis done for $m_1$ and the fact that the determinantal vector of $\textnormal{diag}(1,p^3,p^3)$ is $(1,p^3,p^6)$ imply that $m_2=p+1$.
\par
Let us compute the value of $m_3$. By \eqref{eq_multi}, \eqref{eq_deg_1_p_p^2}, \eqref{eq_deg_scaling} and \eqref{eq_deg_p_p_p^4},
\begin{equation*}
m_3=\frac{p+1}{p^3}\left\vert\left\{\delta_1\in R_{1,p,p^2}, \delta_1\begin{pmatrix}
1 & & \\
& p & \\
& & p^2
\end{pmatrix}\in\Lambda\begin{pmatrix}
p & & \\
& p & \\
& & p^4
\end{pmatrix}\Lambda\right\}\right\vert.
\end{equation*}
Both the analysis done for $m_1$ and the fact that the determinantal vector of $\textnormal{diag}(p,p,p^4)$ is $(p,p^2,p^6)$ imply that $m_3=p+1$.
\par
Let us compute the value of $m_4$. By \eqref{eq_multi}, \eqref{eq_deg_1_p_p^2}, \eqref{eq_deg_scaling} and \eqref{eq_deg_1_p_p^2},
\begin{equation*}
m_4=\frac{p+1}{p^3}\left\vert\left\{\delta_1\in R_{1,p,p^2}, \delta_1\begin{pmatrix}
1 & & \\
& p & \\
& & p^2
\end{pmatrix}\in\Lambda\begin{pmatrix}
p & & \\
& p^2 & \\
& & p^3
\end{pmatrix}\Lambda\right\}\right\vert.
\end{equation*}
Both the analysis done for $m_1$ and the fact that the determinantal vector of $\textnormal{diag}(p,p^2,p^3)$ is $(p,p^3,p^6)$ imply that $m_4=(p+1)(2p-1)$.
\par
Let us compute the value of $m_5$. By \eqref{eq_multi}, \eqref{eq_deg_1_p_p^2} and \eqref{eq_deg_scaling},
\begin{equation*}
m_5=p(p+1)(p^2+p+1)\left\vert\left\{\delta_1\in R_{1,p,p^2}, \delta_1\begin{pmatrix}
1 & & \\
& p & \\
& & p^2
\end{pmatrix}\in\Lambda\begin{pmatrix}
p^2 & & \\
& p^2 & \\
& & p^2
\end{pmatrix}\Lambda\right\}\right\vert.
\end{equation*}
Both the analysis done for $m_1$ and the fact that the determinantal vector of $\textnormal{diag}(p^2,p^2,p^2)$ is $(p^2,p^4,p^6)$ imply that $m_5=p(p+1)(p^2+p+1)$.
\appendix
\section{Decomposition of $\Lambda$-double cosets into $\Lambda$-cosets}\label{sec_appendix}%
By \cite[Lemma 1.2 Page 94 and Lemma 2.1 Page 105]{MR1349824}, we know that every $\Lambda$-double coset $\Lambda g\Lambda$ with $g$ in $GL_3(\Q)$ with integer coefficients is both a finite union of $\Lambda$-left cosets and $\Lambda$-right cosets. In addition, every $\Lambda$-right coset $\Lambda g$ contains a unique upper-triangular column reduced representative, namely
\begin{equation*}
\Lambda g=\Lambda\begin{pmatrix}
a & d & e \\
& b & f \\
& & c
\end{pmatrix}
\end{equation*}
where $0\leq d<b$ and $0\leq e, f<c$ by \cite[Lemma 2.7 Page 109]{MR1349824}.
\par
As a consequence, every $\Lambda$-left coset $g\Lambda$ contains a unique upper-triangular row reduced representative, namely
\begin{equation*}
g \Lambda=\begin{pmatrix}
a & d & e \\
 & b & f \\
 &  & c
\end{pmatrix}\Lambda
\end{equation*}
where $0\leq d,e <a$ and $0\leq f<b$. More explicitely, if $UW\mathstrut^{t}gW=H$ is the upper-triangular column reduced representative of the $\Lambda$-right coset $\Lambda W\mathstrut^{t}gW$ with $W$ the anti-diagonal matrix with $1$'s on the anti-diagonal then $gW\mathstrut^{t}UW=W\mathstrut^{t}HW$ is the upper-triangular row reduced representative of the $\Lambda$-left coset $g\Lambda$.
\par
The previous fact also entails that
\begin{equation}\label{eq_left_right}
\Lambda g\Lambda=\bigcup_{\delta\in R_g}\Lambda\delta\Rightarrow\Lambda g\Lambda=\bigcup_{\delta\in W\mathstrut^{t}R_gW}\delta\Lambda
\end{equation}
since
\begin{equation*}
\Lambda g\Lambda=W\Lambda g\Lambda=W\mathstrut^{t}\left(\Lambda g\Lambda\right)=W\bigcup_{\delta\in R_g}\mathstrut^{t}\delta\Lambda=\bigcup_{\delta\in R_g}W\mathstrut^{t}\delta W\Lambda.
\end{equation*}
\par
Let us finish with a useful elementary practical remark for the computations done in the following sections of the appendix. If $H$ is an upper-triangular column reduced matrix in a $\Lambda$-double coset $\Lambda\textnormal{diag}(p^{\alpha_1},p^{\alpha_2},p^{\alpha_3})\Lambda$ where $p$ is a prime number and $\alpha_1$, $\alpha_2$ and $\alpha_3$ are non-negative integers then
\begin{equation}\label{eq_practice}
H=\begin{pmatrix}
p^{\delta_1} & \ast & \ast \\
& p^{\delta_2} & \ast \\
& & p^{\delta_3}
\end{pmatrix}, \sum_{j=1}^3(\alpha_j-\delta_j)=0, \forall j\in\{1,2,3\}, 0\leq\delta_j\leq\max_{1\leq k\leq 3}{\alpha_k}.
\end{equation}
The fact that the diagonal cofficients of $H$ are powers of $p$ comes from the determinant equation. The condition on the exponents of these diagonal coefficients follows from the fact that $p^{\max{\left\{\alpha_k, 1\leq k\leq 3\right\}}}H^{-1}$ has integer coefficients.
\subsection{Decomposition and degree of $\Lambda \textnormal{diag}(1,1,p)\Lambda$}%
\begin{proposition}\label{propo_dec_1_1_p}
One has
\begin{equation}\label{eq_dec_1_1_p}
\Lambda\textnormal{diag}(1,1,p)\Lambda=\bigcup_{\delta\in R_{1,1,p}}\Lambda\delta=\bigcup_{\delta\in L_{1,1,p}}\delta\Lambda
\end{equation}
where
\begin{equation*}
R_{1,1,p}=\left\{\textnormal{diag}(p,1,1)\right\}\bigcup_{0\leq d_1<p}\left\{\begin{pmatrix}
1 & d_1 & \\
& p & \\
& & 1
\end{pmatrix}\right\}\bigcup_{0\leq e_1,f_1<p}\left\{\begin{pmatrix}
1 & 0 & e_1 \\
& 1 & f_1 \\
& & p
\end{pmatrix}\right\}
\end{equation*}
and
\begin{equation*}
L_{1,1,p}=\left\{\textnormal{diag}(1,1,p)\right\}\bigcup_{0\leq f_1<p}\left\{\begin{pmatrix}
1 &  & \\
& p & f_1 \\
& & 1
\end{pmatrix}\right\}\bigcup_{0\leq d_1, e_1<p}\left\{\begin{pmatrix}
p & d_1 & e_1 \\
& 1 &  \\
& & 1
\end{pmatrix}\right\}.
\end{equation*}
In particular,
\begin{equation}\label{eq_deg_1_1_p}
\textnormal{deg}\left(\textnormal{diag}(1,1,p)\right)=p^2+p+1.
\end{equation}
\end{proposition}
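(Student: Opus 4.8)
The plan is to produce all upper-triangular column-reduced representatives of the right cosets contained in $\Lambda\textnormal{diag}(1,1,p)\Lambda$, read off $R_{1,1,p}$ from that list, deduce $L_{1,1,p}$ by the transpose–anti-transpose symmetry \eqref{eq_left_right}, and count. First I would invoke \eqref{eq_practice} with $(\alpha_1,\alpha_2,\alpha_3)=(0,0,1)$: it forces any column-reduced representative $H=\begin{pmatrix} p^{\delta_1} & \ast & \ast \\ & p^{\delta_2} & \ast \\ & & p^{\delta_3}\end{pmatrix}$ to satisfy $\delta_j\in\{0,1\}$ and $\delta_1+\delta_2+\delta_3=1$, so that exactly one diagonal entry equals $p$ and the other two equal $1$. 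Splitting into the three cases according to the position of the $p$ and imposing the reduction ranges $0\le d<b$, $0\le e,f<c$ recalled from \cite[Lemma 2.7 Page 109]{MR1349824} collapses the free off-diagonal entries: the diagonal $(p,1,1)$ yields only $\textnormal{diag}(p,1,1)$; the diagonal $(1,p,1)$ yields $\begin{pmatrix}1&d_1&0\\&p&0\\&&1\end{pmatrix}$ with $0\le d_1<p$; and the diagonal $(1,1,p)$ yields $\begin{pmatrix}1&0&e_1\\&1&f_1\\&&p\end{pmatrix}$ with $0\le e_1,f_1<p$. This is precisely the list defining $R_{1,1,p}$.

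Next I would verify membership and completeness. Each listed matrix is upper-triangular with a $1$ on its diagonal, so $d_1=1$; it has determinant $\pm p$, so $d_3=p$; and, carrying two diagonal $1$'s, the $2\times2$ minor on the two indices bearing those $1$'s is upper-triangular with $1$'s on its diagonal, hence equals $1$, so $d_2=1$. Thus every listed matrix has $\uple{d}=(1,1,p)=\uple{d}(\textnormal{diag}(1,1,p))$ and therefore lies in the double coset by the determinantal-divisor criterion of \cite{MR0340283}. Conversely, by the uniqueness of the column-reduced representative recalled at the start of the appendix, every right coset inside $\Lambda\textnormal{diag}(1,1,p)\Lambda$ has its unique column-reduced representative among the matrices just enumerated, and distinct representatives name distinct cosets. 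Hence $R_{1,1,p}$ is a complete and irredundant system, which yields $\textnormal{deg}(\textnormal{diag}(1,1,p))=\abs{R_{1,1,p}}=1+p+p^2$.

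Finally, the left-coset decomposition costs nothing extra: applying \eqref{eq_left_right} gives $L_{1,1,p}=W\,\mathstrut^tR_{1,1,p}\,W$ with $W$ the anti-diagonal involution, and a direct evaluation of $\delta\mapsto W\,\mathstrut^t\delta\,W$ on the three families above returns exactly the three families defining $L_{1,1,p}$, sending $\textnormal{diag}(p,1,1)$ to $\textnormal{diag}(1,1,p)$ and interchanging the roles of the super-diagonal parameters. The only step demanding genuine care is the completeness check of the second paragraph, namely confirming that $d_2=1$ holds uniformly so that no spurious column-reduced matrix with the correct diagonal but wrong middle determinantal divisor slips into the list; here the presence of two diagonal $1$'s makes $d_2=1$ automatic, so there is in fact no real obstacle.
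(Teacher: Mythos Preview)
Your argument is correct and follows essentially the same route as the paper: enumerate the column-reduced candidates via \eqref{eq_practice}, check that each has determinantal vector $(1,1,p)$ and hence lies in the double coset, invoke uniqueness of the reduced form for completeness and disjointness, and then obtain $L_{1,1,p}$ from $R_{1,1,p}$ by \eqref{eq_left_right}. The only difference is cosmetic: the paper simply records $\uple{d}(\delta)=(1,1,p)$ for each of the three families, whereas you supply the uniform justification that two diagonal $1$'s force $d_2=1$.
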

\begin{proof}[\proofname{} of Proposition \ref{propo_dec_1_1_p}]%
The decomposition into $\Lambda$-right cosets implies the decomposition into $\Lambda$-left cosets by \eqref{eq_left_right}. The possible upper-triangular column reduced matrices $\delta$ that can occur in the decomposition into $\Lambda$-right cosets are
\begin{eqnarray*}
\begin{pmatrix}
1 & 0 & e_1 \\
& 1 & f_1 \\
& & p
\end{pmatrix} & \rightsquigarrow & \uple{d}(\delta)=(1,1,p), \\
\begin{pmatrix}
1 & d_1 & 0 \\
& p & 0 \\
& & 1
\end{pmatrix} & \rightsquigarrow & \uple{d}(\delta)=(1,1,p), \\
\begin{pmatrix}
p & 0 & 0 \\
& 1 & 0 \\
& & 1
\end{pmatrix} & \rightsquigarrow & \uple{d}(\delta)=(1,1,p)
\end{eqnarray*}
where $0\leq d_1, e_1, f_1<p$. The fact that the determinantal vector of $\textnormal{diag}(1,1,p)$ is $(1,1,p)$ implies the decomposition into $\Lambda$-left cosets given in \eqref{eq_dec_1_1_p} and the computation of the degree too.
\end{proof}
\subsection{Decomposition and degree of $\Lambda\textnormal{diag}(1,p,p)\Lambda$}%
\begin{proposition}\label{propo_dec_1_p_p}
One has
\begin{equation}\label{eq_dec_1_p_p}
\Lambda\textnormal{diag}(1,p,p)\Lambda=\cup_{\delta\in L_{1,p,p}}\delta\Lambda
\end{equation}
where
\begin{equation*}
L_{1,p,p}=\left\{\textnormal{diag}(1,p,p)\right\}\bigcup_{0\leq e_1, f_1<p}\left\{\begin{pmatrix}
p &  & e_1 \\
& p & f_1 \\
& & 1
\end{pmatrix}\right\}\bigcup_{0\leq d_1<p}\left\{\begin{pmatrix}
p & d_1 &  \\
& 1 &  \\
& & p
\end{pmatrix}\right\}
\end{equation*}
In particular,
\begin{equation}\label{eq_deg_1_p_p}
\textnormal{deg}\left(\textnormal{diag}(1,p,p)\right)=p^2+p+1.
\end{equation}
\end{proposition}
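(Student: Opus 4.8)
The plan is to mimic the proof of Proposition \ref{propo_dec_1_1_p}, but to produce the $\Lambda$-left coset decomposition directly, since that is what the statement records. Recall from the beginning of the appendix that every $\Lambda$-left coset inside $\Lambda\textnormal{diag}(1,p,p)\Lambda$ has a unique upper-triangular row reduced representative
\[
\begin{pmatrix} a & d & e \\ & b & f \\ & & c \end{pmatrix}, \quad 0\leq d,e<a,\ 0\leq f<b.
\]
By \eqref{eq_practice} applied with $(\alpha_1,\alpha_2,\alpha_3)=(0,1,1)$, the diagonal entries are powers of $p$ whose exponents $(\delta_1,\delta_2,\delta_3)$ lie in $\{0,1\}$ and sum to $2$; hence the diagonal is exactly one of $\textnormal{diag}(1,p,p)$, $\textnormal{diag}(p,1,p)$ or $\textnormal{diag}(p,p,1)$. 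The whole computation therefore reduces to three finite families, one for each diagonal type.

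For each of the three diagonal types I would enumerate the row reduced off-diagonal entries and retain only those matrices whose determinantal vector equals $\uple{d}(\textnormal{diag}(1,p,p))=(1,p,p^2)$, using that an integral matrix lies in $\Lambda\textnormal{diag}(1,p,p)\Lambda$ if and only if it has this determinantal vector. Since $d_1=1$ and $d_3=p^2$ hold automatically in all three cases (a unit appears among the entries and the determinant is $\pm p^2$), the only genuine condition is $d_2=p$. This is where the arithmetic lives: for the diagonal $\textnormal{diag}(1,p,p)$ one computes $d_2=\gcd(p,f)$, which forces $f=0$ and leaves the single matrix $\textnormal{diag}(1,p,p)$; for $\textnormal{diag}(p,p,1)$ the constraint becomes $p\mid d$, forcing $d=0$ and leaving the $p^2$ matrices with free entries $e,f$; for $\textnormal{diag}(p,1,p)$ one finds $p\mid e$, forcing $e=0$ and leaving the $p$ matrices with free entry $d$. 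Collecting these three families yields exactly $L_{1,p,p}$, and counting them gives $1+p^2+p=p^2+p+1$, which is \eqref{eq_deg_1_p_p}.

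The main obstacle is purely bookkeeping: correctly pinning down the middle determinantal divisor $d_2$ for each diagonal type, that is, verifying that a generic off-diagonal choice drops $d_2$ to $1$ and isolating the exceptional reduced entries for which $d_2=p$. A cleaner alternative, which avoids recomputing anything, is to exploit duality: since $p\,\textnormal{diag}(1,1,p)^{-1}=\textnormal{diag}(p,p,1)$ is $\Lambda$-equivalent to $\textnormal{diag}(1,p,p)$, inverting and scaling the $\Lambda$-right coset decomposition \eqref{eq_dec_1_1_p} of Proposition \ref{propo_dec_1_1_p} sends each $\Lambda\delta$ to a $\Lambda$-left coset $(p\delta^{-1})\Lambda$ of $\Lambda\textnormal{diag}(1,p,p)\Lambda$, and the correspondence $\Lambda\delta\mapsto(p\delta^{-1})\Lambda$ is a bijection; reducing the resulting upper-triangular representatives modulo the relevant diagonal entries recovers $L_{1,p,p}$ and reproves the degree formula at once. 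Either route gives the same answer, and I would present the direct enumeration in order to keep the argument parallel to Proposition \ref{propo_dec_1_1_p}.
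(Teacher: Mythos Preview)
Your direct enumeration is exactly the paper's argument: it lists the three row-reduced diagonal types $(p,p,1)$, $(p,1,p)$, $(1,p,p)$, records their determinantal vectors, and keeps precisely the matrices with $\uple{d}(\delta)=(1,p,p^2)$, which forces $d_1=0$, $e_1=0$, $f_1=0$ respectively and yields $L_{1,p,p}$ and the degree $p^2+p+1$. Your duality shortcut via $p\,\textnormal{diag}(1,1,p)^{-1}$ is a pleasant extra that the paper does not use.
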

\begin{proof}[\proofname{} of Proposition \ref{propo_dec_1_p_p}]%
By \eqref{eq_practice}, the possible upper-triangular row reduced matrices $\delta$ that can occur in the decomposition into $\Lambda$-left cosets are
\begin{eqnarray*}
\begin{pmatrix}
p & d_1 & e_1 \\
& p & f_1 \\
& & 1
\end{pmatrix} & \rightsquigarrow & \uple{d}(\delta)=(1,(p,d_1,d_1f_1),p^2), \\
\begin{pmatrix}
p & d_1 & e_1 \\
& 1 &  \\
& & p
\end{pmatrix} & \rightsquigarrow & \uple{d}(\delta)=(1,(p,e_1),p^2), \\
\begin{pmatrix}
1 &  &  \\
& p & f_1 \\
& & p
\end{pmatrix} & \rightsquigarrow & \uple{d}(\delta)=(1,(p,f_1),p^2)
\end{eqnarray*}
where $0\leq d_1, e_1, f_1<p$. The fact that the determinantal vector of $\textnormal{diag}(1,p,p)$ is $(1,p,p^2)$ implies the decomposition into $\Lambda$-left cosets given in \eqref{eq_dec_1_p_p} and the computation of the degree too.
\end{proof}
\subsection{Decomposition and degree of $\Lambda\textnormal{diag}(1,p,p^2)\Lambda$}%
\begin{proposition}\label{propo_dec_1_p_p^2}
One has
\begin{equation}\label{eq_dec_1_p_p^2}
\Lambda\textnormal{diag}(1,p,p^2)\Lambda=\cup_{\delta\in R_{1,p,p^2}}\Lambda\delta=\cup_{\delta\in L_{1,p,p^2}}\delta\Lambda
\end{equation}
where
\begin{multline*}
R_{1,p,p^2}=\bigcup_{\substack{0\leq d_1<p \\
0\leq e_2, f_2 <p^2 \\
p\mid f_2}}\left\{\begin{pmatrix}
1 & d_1 & e_2 \\
& p & f_2 \\
& & p^2
\end{pmatrix}\right\}\bigcup_{\substack{0\leq e_1<p \\
0\leq d_2 <p^2}}\left\{\begin{pmatrix}
1 & d_2 & e_1 \\
& p^2 &  \\
& & p
\end{pmatrix}\right\} \\
\bigcup_{\substack{0\leq e_2, f_2 <p^2 \\
p\mid e_2}}\left\{\begin{pmatrix}
p &  & e_2 \\
& 1 & f_2 \\
& & p^2
\end{pmatrix}\right\}\bigcup_{0\leq f_1<p}\left\{\begin{pmatrix}
p^2 &  &  \\
& 1 & f_1 \\
& & p
\end{pmatrix}\right\}\bigcup_{\substack{0\leq d_2<p^2 \\
p\mid d_2}}\left\{\begin{pmatrix}
p & d_2 &  \\
& p^2 &  \\
& & 1
\end{pmatrix}\right\} \\
\bigcup_{\substack{0\leq d_1, e_1, f_1<p \\
d_1f_1=0 \\
(d_1,e_1,f_1)\neq (0,0,0)}}\left\{\begin{pmatrix}
p & d_1 & e_1 \\
& p & f_1 \\
& & p
\end{pmatrix}\right\}\bigcup\left\{\begin{pmatrix}
p^2 &  &  \\
& p &  \\
& & 1
\end{pmatrix}\right\}
\end{multline*}
and
\begin{multline*}
L_{1,p,p^2}=\bigcup_{\substack{0\leq f_1<p \\
0\leq d_2, e_2 <p^2 \\
p\mid d_2}}\left\{\begin{pmatrix}
p^2 & d_2 & e_2 \\
& p & f_1 \\
& & 1
\end{pmatrix}\right\}\bigcup_{\substack{0\leq e_1<p \\
0\leq f_2 <p^2}}\left\{\begin{pmatrix}
p &  & e_1 \\
& p^2 & f_2 \\
& & 1
\end{pmatrix}\right\} \\
\bigcup_{\substack{0\leq d_2, e_2 <p^2 \\
p\mid e_2}}\left\{\begin{pmatrix}
p^2 & d_2 & e_2 \\
& 1 &  \\
& & p
\end{pmatrix}\right\}\bigcup_{0\leq d_1<p}\left\{\begin{pmatrix}
p & d_1 &  \\
& 1 &  \\
& & p^2
\end{pmatrix}\right\}\bigcup_{\substack{0\leq f_2<p^2 \\
p\mid f_2}}\left\{\begin{pmatrix}
1 &  &  \\
& p^2 & f_2 \\
& & p
\end{pmatrix}\right\} \\
\bigcup_{\substack{0\leq d_1, e_1, f_1<p \\
d_1f_1=0 \\
(d_1,e_1,f_1)\neq (0,0,0)}}\left\{\begin{pmatrix}
p & d_1 & e_1 \\
& p & f_1 \\
& & p
\end{pmatrix}\right\}\bigcup\left\{\begin{pmatrix}
p^2 &  &  \\
& p &  \\
& & 1
\end{pmatrix}\right\}.
\end{multline*}
In particular,
\begin{equation}\label{eq_deg_1_p_p^2}
\textnormal{deg}\left(\textnormal{diag}(1,p,p^2)\right)=p(p+1)(1+p+p^2).
\end{equation}
\end{proposition}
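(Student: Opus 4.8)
The plan is to follow exactly the template used for Propositions \ref{propo_dec_1_1_p} and \ref{propo_dec_1_p_p}, but with more bookkeeping since $\textnormal{diag}(1,p,p^2)$ lies deeper in the Hecke algebra. First I would record that the determinantal vector of $\textnormal{diag}(1,p,p^2)$ is $\uple{d}=(1,p,p^3)$: the gcd of the entries is $1$, the gcd of the $2\times 2$ minors is $p$, and the determinant is $p^3$. By the determinantal-divisor criterion recalled at the end of Section \ref{sec_GL3}, an integral matrix $\delta$ lies in $\Lambda\textnormal{diag}(1,p,p^2)\Lambda$ if and only if $\uple{d}(\delta)=(1,p,p^3)$. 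Thus the whole problem reduces to listing the admissible upper-triangular column reduced representatives and testing this single condition.

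Next I would enumerate the possible diagonal shapes. By \eqref{eq_practice}, any column reduced representative has diagonal $(p^{\delta_1},p^{\delta_2},p^{\delta_3})$ with $\delta_1+\delta_2+\delta_3=3$ and each $0\le\delta_j\le 2$; there are exactly seven such triples, namely the six permutations of $(2,1,0)$ together with $(1,1,1)$. For each shape I would write the general column reduced matrix, so that the off-diagonal entries satisfy $0\le d<p^{\delta_2}$ for the $(1,2)$-entry and $0\le e,f<p^{\delta_3}$ for the $(1,3)$- and $(2,3)$-entries, and then compute $d_1(\delta)$ and $d_2(\delta)$ (the value $d_3(\delta)=p^3$ being automatic). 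Imposing $d_1(\delta)=1$ and $d_2(\delta)=p$ turns each shape into a family cut out by explicit congruence or vanishing conditions on the off-diagonal entries; for instance the shape $(1,p,p^2)$ forces $p\mid f_2$ through the minor computation $d_2(\delta)=\gcd(p,f_2)$, while several shapes collapse to a single diagonal representative. Collecting the surviving families yields precisely the set $R_{1,p,p^2}$.

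Finally I would pass from right to left cosets by the general transpose-and-conjugate identity \eqref{eq_left_right}, so that $L_{1,p,p^2}=W\,\mathstrut^{t}R_{1,p,p^2}\,W$ produces the stated list, and I would obtain the degree by summing the cardinalities of the seven families, namely $p^4+p^3+p^3+p+p+(2p^2-p-1)+1=p^4+2p^3+2p^2+p=p(p+1)(1+p+p^2)$. I expect the main obstacle to be the scalar-type shape $(1,1,1)$, where the diagonal is $(p,p,p)$: here $d_1(\delta)=1$ already demands a unit among the off-diagonal entries, and the requirement $d_2(\delta)=p$ hinges on the $2\times 2$ minor $d_1f_1-pe_1$, so that $d_2(\delta)=p$ translates into the delicate constraint $d_1f_1=0$ together with $(d_1,e_1,f_1)\ne(0,0,0)$. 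Keeping the congruence ranges $0\le\ast<p$ versus $0\le\ast<p^2$ straight across all seven shapes, and checking that no representative is counted twice, is the only genuinely error-prone part; everything else is the same minor-by-minor verification already carried out in the two preceding propositions.
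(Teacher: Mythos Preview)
Your proposal is correct and follows essentially the same approach as the paper: enumerate the seven diagonal shapes allowed by \eqref{eq_practice}, compute the determinantal vectors, impose $\uple{d}(\delta)=(1,p,p^3)$ to obtain the congruence conditions and counts $(p-1)(2p+1),\,p^4,\,p^3,\,p^3,\,p,\,p,\,1$, then invoke \eqref{eq_left_right} for the left cosets and sum for the degree. Your identification of the $(p,p,p)$ shape as the delicate case, with the constraint $d_1f_1=0$ and $(d_1,e_1,f_1)\neq(0,0,0)$, matches the paper exactly.
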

\begin{proof}[\proofname{} of Proposition \ref{propo_dec_1_p_p^2}]%
The decomposition into $\Lambda$-right cosets implies the decomposition into $\Lambda$-left cosets by \eqref{eq_left_right}. By \eqref{eq_practice}, the possible upper-triangular column reduced matrices $\delta$ that can occur in the decomposition into $\Lambda$-right cosets are
\begin{eqnarray*}
\text{Type $1$:} \begin{pmatrix}
p & d_1 & e_1 \\
& p & f_1 \\
& & p
\end{pmatrix} & \rightsquigarrow & \uple{d}(\delta)=((p,d_1,e_1,f_1),(p^2,pd_1,pf_1,d_1f_1-pe_1),p^3)
\end{eqnarray*}
and
\begin{eqnarray*}
\text{Type $2$:} \begin{pmatrix}
1 & d_1 & e_2 \\
& p & f_2 \\
& & p^2
\end{pmatrix} & \rightsquigarrow & \uple{d}(\delta)=(1,(p,f_2),p^3), \\
\text{Type $3$:} \begin{pmatrix}
1 & d_2 & e_1 \\
& p^2 & f_1 \\
& & p
\end{pmatrix} & \rightsquigarrow & \uple{d}(\delta)=(1,(p,f_1),p^3), \\
\text{Type $4$:} \begin{pmatrix}
p &  & e_2 \\
& 1 & f_2 \\
& & p^2
\end{pmatrix} & \rightsquigarrow & \uple{d}(\delta)=(1,(p,e_2),p^3), \\
\text{Type $5$:} \begin{pmatrix}
p^2 &  & e_1 \\
& 1 & f_1 \\
& & p
\end{pmatrix} & \rightsquigarrow & \uple{d}(\delta)=(1,(p,e_1),p^3), \\
\text{Type $6$:} \begin{pmatrix}
p & d_2 &  \\
& p^2 &  \\
& & 1
\end{pmatrix} & \rightsquigarrow & \uple{d}(\delta)=(1,(p,d_2),p^3), \\
\text{Type $7$:} \begin{pmatrix}
p^2 & d_1 &  \\
& p &  \\
& & 1
\end{pmatrix} & \rightsquigarrow & \uple{d}(\delta)=(1,(p,d_1),p^3)
\end{eqnarray*}
where $0\leq d_1,e_1,f_1<p$ and $0\leq d_2,e_2,f_2<p^2$. Let us count the matrices among the previous ones, whose determinantal vector is the same as the one of $\textnormal{diag}(1,p,p^2)$, namely $(1,p,p^3)$.
\par
Let us consider the matrices of type $1$. The condition on $d_2(\delta)$ implies $d_1\neq 0$. The condition on $d_1(\delta)$ implies that $(d_1,e_1,f_1)\neq(0,0,0)$. The condition on $d_2(\delta)$ implies $p\mid d_1f_1$ such that $p\mid d_1$ or $p\mid f_1$, namely $d_1=0$ or $f_1=0$. There are $(p-1)(2p+1)$ such matrices of type $1$.
\par
Let us consider the matrices of type $2$. The condition on $d_2(\delta)$ implies $p\mid f_2$. There are $p^4$ such matrices of type $2$.
\par
Let us consider the matrices of type $3$. The condition on $d_2(\delta)$ implies $f_1=0$. There are $p^3$ such matrices of type $3$.
\par
Let us consider the matrices of type $4$. The condition on $d_2(\delta)$ implies $p\mid e_2$. There are $p^3$ such matrices of type $4$.
\par
Let us consider the matrices of type $5$. The condition on $d_2(\delta)$ implies $e_1=0$. There are $p$ such matrices of type $5$.
\par
Let us consider the matrices of type $6$. The condition on $d_2(\delta)$ implies $p\mid d_2$. There are $p$ such matrices of type $6$.
\par
Let us consider the matrices of type $7$. The condition on $d_2(\delta)$ implies $d_1=0$. There is $1$ such matrix of type $7$.
\par
One can recover the decomposition in $\Lambda$-right cosets given in \eqref{eq_dec_1_p_p^2} and the value of the degree given in \eqref{eq_deg_1_p_p^2} by summing all the contributions in the previous paragraphs.
\end{proof}
\subsection{Degree of $\Lambda\textnormal{diag}(1,p^2,p^4)\Lambda$}%
\begin{proposition}\label{propo_dec_1_p^2_p^4}
One has
\begin{equation}\label{eq_deg_1_p^2_p^4}
\textnormal{deg}\left(\textnormal{diag}(1,p^2,p^4)\right)=p^5(p+1)(p^2+p+1).
\end{equation}
\end{proposition}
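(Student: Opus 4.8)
The plan is to mimic the proof of Proposition \ref{propo_dec_1_p_p^2}, counting the number of $\Lambda$-right cosets directly rather than exhibiting explicit representatives (the statement only asks for the degree). First I would record that the determinantal vector of $\textnormal{diag}(1,p^2,p^4)$ is $(1,p^2,p^6)$, so by the criterion recalled at the end of Section \ref{sec_GL3} a matrix $H$ lies in $\Lambda\textnormal{diag}(1,p^2,p^4)\Lambda$ precisely when $\uple{d}(H)=(1,p^2,p^6)$. Each $\Lambda$-right coset has a unique upper-triangular column reduced representative
\[
H=\begin{pmatrix} p^{\delta_1} & d & e \\ & p^{\delta_2} & f \\ & & p^{\delta_3}\end{pmatrix},\qquad 0\leq d<p^{\delta_2},\quad 0\leq e,f<p^{\delta_3},
\]
and by \eqref{eq_practice} the diagonal exponents satisfy $\delta_1+\delta_2+\delta_3=6$ with $0\leq\delta_j\leq 4$. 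Thus the degree is the number of such $H$ with $d_1(H)=1$ and $d_2(H)=p^2$, the condition $d_3(H)=p^6$ being the determinant and already encoded in $\delta_1+\delta_2+\delta_3=6$.

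Next I would organize the count by the diagonal type $(\delta_1,\delta_2,\delta_3)$: there are nineteen admissible triples with sum $6$ and entries in $\{0,\dots,4\}$. For each type I would impose $d_1(H)=1$ (the gcd of all entries is a $p$-unit, which for types with all $\delta_j\geq1$ forces one of $d,e,f$ to be coprime to $p$) and $d_2(H)=p^2$. The six nonzero $2\times2$ minors of $H$ are, up to sign, $p^{\delta_1+\delta_2}$, $p^{\delta_1+\delta_3}$, $p^{\delta_2+\delta_3}$, $p^{\delta_1}f$, $p^{\delta_3}d$ and $df-p^{\delta_2}e$, so the requirement $\gcd=p^2$ becomes an explicit set of divisibility constraints cutting the naive count $p^{\delta_2}\cdot p^{2\delta_3}$ of off-diagonal triples down by prescribed powers of $p$ (several types split into subcases according to which of $d,e,f$ is divisible by $p$, and several contribute $0$). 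Summing the surviving contributions over the nineteen types should yield
\[
\textnormal{deg}\left(\textnormal{diag}(1,p^2,p^4)\right)=p^5(p+1)(p^2+p+1).
\]

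The main obstacle is purely the bookkeeping: the condition $d_2(H)=p^2$ couples the three off-diagonal entries through the single minor $df-p^{\delta_2}e$, so the count does not factor across $d$, $e$, $f$ and must be carried out type by type. To guard against arithmetic slips over so many cases, I would cross-check the final answer against the closed form for $GL_3$ Hecke double cosets: the degree of the coset of dominant cotype $\mu=(4,2,0)$ equals $p^{\langle 2\rho,\mu\rangle}$ times a ratio of Poincar\'e-type polynomials, namely $p^{8}(1+p^{-1})(1+p^{-1}+p^{-2})=p^5(p+1)(p^2+p+1)$, in exact parallel with the evaluation $p^{4}(1+p^{-1})(1+p^{-1}+p^{-2})=p(p+1)(p^2+p+1)$ underlying \eqref{eq_deg_1_p_p^2}. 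Equivalently, this is the number of sublattices of $\Z^3$ whose quotient is isomorphic to $\Z/p^2\oplus\Z/p^4$, which provides an independent confirmation of the claimed formula.
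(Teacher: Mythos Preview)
Your proposal is correct and follows essentially the same approach as the paper: enumerate the nineteen admissible diagonal types $(\delta_1,\delta_2,\delta_3)$ with $\delta_1+\delta_2+\delta_3=6$ and $0\leq\delta_j\leq4$, and for each type count the upper-triangular column reduced matrices with determinantal vector $(1,p^2,p^6)$ by imposing the divisibility constraints coming from the six $2\times2$ minors. The paper carries out all nineteen cases explicitly (and the bookkeeping is indeed the only obstacle); your additional cross-check via the closed formula $p^{\langle 2\rho,\mu\rangle}(1+p^{-1})(1+p^{-1}+p^{-2})$ for regular $\mu$ is a useful sanity check that the paper does not include.
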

\begin{proof}[\proofname{} of Proposition \ref{propo_dec_1_p^2_p^4}]%
By \eqref{eq_practice}, the possible upper-triangular column reduced matrices $\delta$ that can occur in the decomposition into $\Lambda$-right cosets are
\begin{eqnarray*}
\text{Type $1$:} \begin{pmatrix}
p^4 & d_2 &  \\
& p^2 &  \\
& & 1
\end{pmatrix} & \rightsquigarrow & \uple{d}(\delta)=(1,(p^2,d_2),p^6), \\
\text{Type $2$:} \begin{pmatrix}
p^4 &  & e_2 \\
& 1 & f_2 \\
& & p^2
\end{pmatrix} & \rightsquigarrow & \uple{d}(\delta)=(1,(p^2,e_2),p^6), \\
\text{Type $3$:} \begin{pmatrix}
p^2 & d_4 &  \\
& p^4 &  \\
& & 1
\end{pmatrix} & \rightsquigarrow & \uple{d}(\delta)=(1,(p^2,d_4),p^6), \\
\text{Type $4$:} \begin{pmatrix}
p^2 &  & e_4 \\
& 1 & f_4 \\
& & p^4
\end{pmatrix} & \rightsquigarrow & \uple{d}(\delta)=(1,(p^2,e_4),p^6), \\
\text{Type $5$:} \begin{pmatrix}
1 & d_4 & e_2 \\
& p^4 & f_2 \\
& & p^2
\end{pmatrix} & \rightsquigarrow & \uple{d}(\delta)=(1,(p^2,f_2,d_4f_2),p^6), \\
\text{Type $6$:} \begin{pmatrix}
1 & d_2 & e_4 \\
& p^2 & f_4 \\
& & p^4
\end{pmatrix} & \rightsquigarrow & \uple{d}(\delta)=(1,(p^2,f_4,d_2f_4),p^6)
\end{eqnarray*}
and
\begin{eqnarray*}
\text{Type $7$:} \begin{pmatrix}
p^4 & d_1 & e_1 \\
& p & f_1 \\
& & p
\end{pmatrix} & \rightsquigarrow & \uple{d}(\delta)=((p,d_1,e_1,f_1),(p^2,pd_1,d_1f_1-pe_1),p^6), \\
\text{Type $8$:} \begin{pmatrix}
p & d_4 & e_1 \\
& p^4 & f_1 \\
& & p
\end{pmatrix} & \rightsquigarrow & \uple{d}(\delta)=((p,d_4,e_1,f_1),(p^2,pf_1,pd_4,d_4f_1),p^6), \\
\text{Type $9$:} \begin{pmatrix}
p & d_1 & e_4 \\
& p & f_4 \\
& & p^4
\end{pmatrix} & \rightsquigarrow & \uple{d}(\delta)=((p,d_1,e_4,f_4),(p^2,pf_4,d_1f_4-pe_4),p^6)
\end{eqnarray*}
and
\begin{eqnarray*}
\text{Type $10$:} \begin{pmatrix}
p^3 & d_3 &  \\
& p^3 &  \\
& & 1
\end{pmatrix} & \rightsquigarrow & \uple{d}(\delta)=(1,(p^3,d_3),p^6), \\
\text{Type $11$:} \begin{pmatrix}
p^3 &  & e_3 \\
& 1 & f_3 \\
& & p^3
\end{pmatrix} & \rightsquigarrow & \uple{d}(\delta)=(1,(p^3,e_3),p^6), \\
\text{Type $12$:} \begin{pmatrix}
1 & d_3 & e_3 \\
& p^3 & f_3 \\
& & p^3
\end{pmatrix} & \rightsquigarrow & \uple{d}(\delta)=(1,(p^3,f_3,d_3f_3),p^6)
\end{eqnarray*}
and
\begin{eqnarray*}
\text{Type $13$:} \begin{pmatrix}
p^3 & d_2 & e_1 \\
& p^2 & f_1 \\
& & p
\end{pmatrix} & \rightsquigarrow & \uple{d}(\delta)=((p,d_2,e_1,f_1),(p^3,pd_2,d_2f_1-p^2e_1),p^6) \\
\text{Type $14$:} \begin{pmatrix}
p^3 & d_1 & e_2 \\
& p & f_2 \\
& & p^2
\end{pmatrix} & \rightsquigarrow & \uple{d}(\delta)=((p,d_1,e_2,f_2),(p^3,p^2d_1,d_1f_2-pe_2),p^6) \\
\text{Type $15$:} \begin{pmatrix}
p^2 & d_3 & e_1 \\
& p^3 & f_1 \\
& & p
\end{pmatrix} & \rightsquigarrow & \uple{d}(\delta)=((p,d_3,e_1,f_1),(p^3,pd_3,p^2f_1,d_3f_1),p^6) \\
\text{Type $16$:} \begin{pmatrix}
p^2 & d_1 & e_3 \\
& p & f_3 \\
& & p^3
\end{pmatrix} & \rightsquigarrow & \uple{d}(\delta)=((p,d_1,e_3,f_3),(p^3,d_1f_3-pe_3,p^2f_3),p^6)
\end{eqnarray*}
and
\begin{eqnarray*}
\text{Type $17$:} \begin{pmatrix}
p & d_3 & e_2 \\
& p^3 & f_2 \\
& & p^2
\end{pmatrix} & \rightsquigarrow & \uple{d}(\delta)=((p,d_3,e_2,f_2),(p^3,pf_2,p^2d_3,d_3f_2),p^6) \\
\text{Type $18$:} \begin{pmatrix}
p & d_2 & e_3 \\
& p^2 & f_3 \\
& & p^3
\end{pmatrix} & \rightsquigarrow & \uple{d}(\delta)=((p,d_2,e_3,f_3),(p^3,pf_3,d_2f_3-p^2e_3),p^6)
\end{eqnarray*}
and
\begin{eqnarray*}
\text{Type $19$:} \begin{pmatrix}
p^2 & d_2 & e_2 \\
& p^2 & f_2 \\
& & p^2
\end{pmatrix} & \rightsquigarrow & \uple{d}(\delta)=((p^2,d_2,e_2,f_2),(p^4,p^2d_2,p^2f_2,d_2f_2-p^2e_2),p^6)
\end{eqnarray*}
where $0\leq d_j, e_j, f_j<p^j$ for $j=1,2,3,4$. Let us count the matrices among the previous ones, whose determinantal vector is the same as the one of $\textnormal{diag}(1,p^2,p^4)$, namely $(1,p^2,p^6)$.
\par
Let us consider the matrices of type $1$. The condition on $d_2(\delta)$ implies $d_2=0$. There is $1$ relevant matrix of type $1$.
\par
Let us consider the matrices of type $2$. The condition on $d_2(\delta)$ implies $e_2=0$. There are $p^2$ relevant matrices of type $2$.
\par
Let us consider the matrices of type $3$. The condition on $d_2(\delta)$ implies $p^2\mid d_4$. There are $p^2$ relevant matrices of type $3$.
\par
Let us consider the matrices of type $4$. The condition on $d_2(\delta)$ implies $p^2\mid e_4$. There are $p^6$ relevant matrices of type $4$.
\par
Let us consider the matrices of type $5$. The condition on $d_2(\delta)$ implies $e_2=0$ . There are $p^6$ relevant matrices of type $5$.
\par
Let us consider the matrices of type $6$. The condition on $d_2(\delta)$ implies $p^2\mid f_4$ . There are $p^8$ relevant matrices of type $6$.
\par
Let us consider the matrices of type $7$. The condition on $d_2(\delta)$ implies $d_1=e_1=0$ and the condition on $d_1(\delta)$ implies $f_1\neq 0$. There are $p-1$ relevant matrices of type $7$.
\par
Let us consider the matrices of type $8$. The condition on $d_2(\delta)$ implies $f_1=0$ and $p\mid d_4$. The condition on $d_1(\delta)$ implies $e_1\neq 0$. There are $p^3(p-1)$ relevant matrices of type $8$.
\par
Let us consider the matrices of type $9$. The condition on $d_2(\delta)$ implies $p\mid f_4$ and $p\mid d_1f_4/p-e_4$. One has $d_1\neq 0$ since otherwise $d_1(\delta)=1=(p,e_4)$ and $d_2(\delta)=p(p,e_4)=p\neq p^2$. Thus, $d_1$ is invertible modulo $p$ and $f_4/p\equiv e_4\overline{d_1}\pmod{p}$ such that $f_4/p$ can take $p^2$ values. There are $(p-1)p^6$ relevant matrices of type $9$.
\par
Let us consider the matrices of type $10$. The condition on $d_2(\delta)$ implies $p^2\mid\mid d_3$. There are $p-1$ relevant matrices of type $10$.
\par
Let us consider the matrices of type $11$. The condition on $d_2(\delta)$ implies $p^2\mid\mid e_3$. There are $(p-1)p^3$ relevant matrices of type $11$.
\par
Let us consider the matrices of type $12$. The condition on $d_2(\delta)$ implies $p^2\mid\mid f_3$. There are $(p-1)p^6$ relevant matrices of type $12$.
\par
Let us consider the matrices of type $13$. Note that $(e_1,f_1)\neq (0,0)$ since otherwise $d_1(\delta)=1=(p,d_2)$, which implies that $d_2(\delta)=(pd_2,p^3)=p\neq p^2$. As a consequence, $d_1(\delta)=1=(p,d_2,e_1,f_1)$. The fact that $d_2(\delta)=p^2$ implies that $p\mid d_2$ and $p\mid f_1d_2/p$, namely $f_1=0$ or $d_2=0$. If $d_2=0$ then $d_2(\delta)=p^2=(p^3,p^2e_1)$ such that $e_1\neq 0$. There are $p(p-1)$ such matrices. If $d_2\neq 0$ then $f_1=0$, $d_2(\delta)=p^2(p,d_2/p,e_1)=p^2$ since $d_2/p$ is coprime with $p$ and $d_1(\delta)=1=(p,e_1)$ such that $e_1\neq 0$. There are $(p-1)^2$ such matrices. Finally, there are $(p-1)(2p-1)$ relevant matrices of type $13$.
\par
Let us consider the matrices of type $14$. The fact that $d_2(\delta)=p^2$ implies that  $p^2\mid d_1f_2-pe_2$. If $d_1=0$ then $p\mid e_2$ and $d_2(\delta)=p^2=(p^3,p^2e_2/p)$ if $e_2\neq 0$. $d_1(\delta)=1=(p,f_2)$ implies that $p\nmid f_2$. There are $(p-1)(p^2-p)$ such matrices. If $d_1\neq 0$ then the value of $f_2$ is fixed by $f_2\equiv pe_2\overline{d_1}\pmod{p^2}$ and $d_1(\delta)=(p,d_1)=1$. There are $p^2(p-1)$ such matrices. Finally, there are $(p-1)(2p^2-p)$ relevant matrices of type $14$.
\par
Let us consider the matrices of type $15$. The condition $d_2(\delta)=p^2$ implies that $p\mid d_3$ and $p\mid f_1d_3/p$. If $f_1=0$ then $d_2(\delta)=p^2=p^2(p,d_3/p)$ such that $p\mid\mid d_3$. The condition $d_1(\delta)=1=(p,e_1)$ implies that $e_1\neq 0$. There are $(p^2-p)(p-1)$ such matrices. If $f_1\neq 0$ then $p^2\mid d_3$ and $d_1(\delta)=1$. There are $p^2(p-1)$ such matrices. Finally, there are $(p-1)(2p^2-p)$ relevant matrices of type $15$.
\par
Let us consider the matrices of type $16$. The condition $d_2(\delta)=p^2$ implies that $p^2\mid d_1f_3-pe_3$. If $p\mid e_3$ then $p^2\mid d_1f_3$. If $p\mid e_3$ and $p\mid d_1$ then $d_1=0$ and the condition $d_1(\delta)=1=(p,f_3)$ implies that $p\nmid f_3$ and $d_2(\delta)=p^2$. There are $p^2(p^3-p^2)$ such matrices. If $p\mid e_3$ and $p\nmid d_1$ then $p^2\mid f_3$ then $d_2(\delta)=p^2(p,d_1f_3/p^2-e_3/p)\neq p^2$ if and only if $f_3/p^2\equiv \overline{d_1}e_3/p\pmod{p}$, which given $d_1$ and $e_3/p$ can happen for only one value of $f_3/p^2$. There are $(p-1)p^2(p-1)$ such matrices. If $p\nmid e_3$ then $d_1(\delta)=1=(p,d_1,e_3,f_3)$. The condition $p^2\mid d_1f_3-pe_3$ implies that $p^2\nmid d_1f_3$ and $p\nmid d_1$ but $p\mid f_3$. The condition $d_2(\delta)$ implies that $p\mid\mid d_1f_3/p-e_3$. Given $d_1$ and $e_3$, there are $p$ choices for $f_3/p$ given by $f_3/p\equiv\overline{d_1}e_3\pmod{p}$ but one has to remove the value satisfying $f_3/p\equiv\overline{d_1}e_3\pmod{p^2}$. There are $(p-1)(p^3-p^2)(p-1)$ such matrices. Finally, there are $p^3(p-1)(2p-1)$ relevant matrices of type $16$.
\par
Let us consider the matrices of type $17$. The condition $d_2(\delta)=p^2$ implies that $p\mid f_2$. If $f_2=0$ then $d_2(\delta)=p^2=p^2(p,d_3)$ such that $p\nmid d_3$, which implies $d_1(\delta)=1$. There are $(p^3-p^2)p^2$ such matrices. If $f_2\neq 0$ then $p\mid d_3$ since $p\mid d_3f_2/p$, in which case $d_2(\delta)=p^2$. The condition $d_1(\delta)=1$ implies that $p\nmid e_2$. There are $p^2(p^2-p)(p-1)$ such matrices. Finally, there are $p^3(p-1)(2p-1)$ relevant matrices of type $17$.
\par
Let us consider the matrices of type $18$. The condition $d_2(\delta)=p^2$ implies that $p\mid f_3$ and $p\mid d_2f_3/p$. If $p^2\mid f_3$ then $d_2(\delta)=p^2=p^2(p,d_2f_3/p^2-e_3)$. One has to remove the $p^2$ values of $e_3$ satisfying $e_3\equiv d_2f_3/p^2\pmod{p}$. In this case, one has $d_1(\delta)=(p,d_2,e_3)=1$ since if $p\mid(d_2,e_3)$ then $(p,d_2f_3/p^2-e_3)\neq 1$. There are $p^2(p^3-p^2)p$ such matrices. If $p^2\nmid f_3$ then $p\mid d_2$ and the conditions on $d_1(\delta)$ and $d_2(\delta)$ are satisfied. There are $p(p^3-p^2)(p^2-p)$ such matrices. Finally, there are $p^4(p-1)(2p-1)$ relevant matrices of type $18$.
\par
Let us consider the matrices of type $19$. The condition on $d_2(\delta)$ implies that $p^2\mid d_2f_2$. If $d_2=0$ then $d_2(\delta)=p^2=p^2(p^4,e_2,f_2)$ and $d_1(\delta)=1=(p^2,e_2,f_2)$. One has to remove the couples $(e_2,f_2)$ satisfying $p\mid e_2$ and $p\mid f_2$, namely $p^2$ couples. There are $p^4-p^2$ such matrices. If $d_2\neq 0$ and $f_2=0$ then $d_2(\delta)=p^2=p^2(p^2,d_2,e_2)$ and $d_1(\delta)=1=(p^2,d_2,e_2)$. One has to remove the couples $(d_2,e_2)$ satisfying $p\mid d_2$ and $p\mid e_2$, namely $(p-1)p$ couples. There are $(p^2-1)p^2-(p-1)p$ such matrices. If $d_2\neq 0$ and $f_2\neq 0$ then $d_2(\delta)=p^2=p^2(p^2,pd_2/p,pf_2/p,d_2f_2/p^2-e_2)$ and $d_1(\delta)=1=(p^2,e_2)$. Thus, $p\nmid e_2$ and $p\nmid d_2f_2/p^2-e_2$. Among the $p^2-p$ values of $e_2$ satisfying $p\nmid e_2$, one has to remove these satisfying $e_2\equiv d_2f_2/p^2\pmod{p}$ of cardinal $p$. There are $(p-1)(p^2-2p)(p-1)$ such matrices. Finally, there are $p(p-1)(3p^2-p+1)$ relevant matrices of type $19$. 
\par
One can recover the value of the degree given in \eqref{eq_deg_1_p^2_p^4} by summing all the contributions in the previous paragraphs.
\end{proof}
\subsection{Degree of $\Lambda\textnormal{diag}(1,p^3,p^3)\Lambda$}%
\begin{proposition}\label{propo_dec_1_p^3_p^3}
One has
\begin{equation}\label{eq_deg_1_p^3_p^3}
\textnormal{deg}\left(\textnormal{diag}(1,p^3,p^3)\right)=p^4(p^2+p+1).
\end{equation}
\end{proposition}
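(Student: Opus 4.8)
The plan is to count, exactly as in the proof of Proposition \ref{propo_dec_1_p^2_p^4}, the upper-triangular column reduced representatives of the $\Lambda$-right cosets contained in $\Lambda\textnormal{diag}(1,p^3,p^3)\Lambda$; by the determinantal-divisor criterion recalled after \eqref{eq_smith}, these are precisely the matrices $\delta$ with $\uple{d}(\delta)=\uple{d}(\textnormal{diag}(1,p^3,p^3))=(1,p^3,p^6)$. By \eqref{eq_practice} applied to $(\alpha_1,\alpha_2,\alpha_3)=(0,3,3)$, any such representative is upper-triangular with diagonal entries $p^{\delta_1},p^{\delta_2},p^{\delta_3}$ where $\delta_1+\delta_2+\delta_3=6$ and $0\leq\delta_j\leq 3$; there are exactly ten admissible diagonal shapes, namely the three orderings of $(0,3,3)$, the six orderings of $(1,2,3)$, and $(2,2,2)$. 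For each shape I would write the general column reduced $\delta$, with the $(1,2)$-entry ranging below the second diagonal coefficient and the $(1,3)$- and $(2,3)$-entries below the third, compute $\uple{d}(\delta)$, and retain those equal to $(1,p^3,p^6)$. Since $d_3(\delta)=p^6$ holds automatically, the two effective constraints are $d_1(\delta)=1$ and $d_2(\delta)=p^3$, and \eqref{eq_deg_1_p^3_p^3} is the sum of the ten contributions.

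First I would dispose of the three shapes with a vanishing diagonal exponent, $(0,3,3)$, $(3,0,3)$ and $(3,3,0)$. In each of these $d_1(\delta)=1$ holds for free, the reduction ranges are small or trivial, and the condition $d_2(\delta)=p^3$ leaves only a rectangular family of free entries; the three shapes contribute respectively $p^6$, $p^3$ and $1$, the first being the dominant term. Their total is $p^6+p^3+1$, so the remaining seven shapes must produce $p^5+p^4-p^3-1$.

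The main work, and the principal obstacle, lies in the six orderings of $(1,2,3)$ together with the balanced shape $(2,2,2)$. Here the $2\times2$ minor in the top-right corner is of the mixed form $df-e\,p^{\delta_2}$, a product of two off-diagonal entries minus a diagonal multiple of the third, so requiring $d_2(\delta)$ to equal $p^3$ \emph{exactly} --- neither $p^2$ nor $p^4$ --- couples the off-diagonal entries through congruences modulo powers of $p$. Counting the surviving matrices then demands precisely the valuation-by-valuation bookkeeping carried out for types $13$ through $19$ in the proof of Proposition \ref{propo_dec_1_p^2_p^4}: one splits according to the exact $p$-adic valuations of the entries, solves the resulting linear congruence for the coupled entry, and subtracts the degenerate solutions. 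I expect this inclusion-exclusion to be the delicate step; summing all ten contributions should then collapse to $p^4(p^2+p+1)$.

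As an independent consistency check I would use that $\textnormal{deg}$ is unchanged under the adjugate involution $g\mapsto\det(g)\,g^{-1}$, which preserves $\Lambda=GL_3(\Z)$ and turns $\Lambda$-right cosets into $\Lambda$-left cosets, whose count coincides by \eqref{eq_left_right}. Applied to $g=\textnormal{diag}(1,p^3,p^3)$ it gives $\det(g)g^{-1}=\textnormal{diag}(p^6,p^3,p^3)\sim p^3\textnormal{diag}(1,1,p^3)$, so by \eqref{eq_deg_scaling} one has $\textnormal{deg}(\textnormal{diag}(1,p^3,p^3))=\textnormal{deg}(\textnormal{diag}(1,1,p^3))$. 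The latter counts the sublattices of $\Z^3$ with cyclic quotient of order $p^3$, equivalently the surjections $\Z^3\twoheadrightarrow\Z/p^3\Z$ up to $\textnormal{Aut}(\Z/p^3\Z)$, which number $(p^9-p^6)/(p^3-p^2)=p^4(p^2+p+1)$; this matches \eqref{eq_deg_1_p^3_p^3} and is coherent with the equalities $\textnormal{deg}(\textnormal{diag}(1,1,p))=\textnormal{deg}(\textnormal{diag}(1,p,p))$ and $\textnormal{deg}(\textnormal{diag}(p,p,p^4))=\textnormal{deg}(\textnormal{diag}(1,p^3,p^3))$ already recorded in Theorem \ref{cor_A}.
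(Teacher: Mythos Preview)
Your outlined approach is exactly the paper's: enumerate the ten diagonal shapes allowed by \eqref{eq_practice}, compute $\uple{d}(\delta)$ for each column-reduced $\delta$, and count those with $\uple{d}(\delta)=(1,p^3,p^6)$. You correctly dispatch the three shapes containing a $0$ (contributing $p^6+p^3+1$) and correctly identify the residual $p^5+p^4-p^3-1$ that the seven remaining shapes must supply; but you do not actually carry those seven cases out. The paper does, finding contributions $p-1$, $p(p-1)$, $p(p-1)$, $p^3(p-1)$, $p^3(p-1)$, $p^4(p-1)$ for the six orderings of $(1,2,3)$ and $p(p-1)^2$ for $(2,2,2)$, and these sum as required. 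So your main line is the right plan but, as written, only a sketch.

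That said, your ``consistency check'' is in fact a complete and independent proof, and a genuinely different one. The anti-involution $g\mapsto g^{-1}$ (or equivalently the adjugate $g\mapsto\det(g)g^{-1}$ on integer matrices) preserves $\Lambda$ and carries $\Lambda$-right cosets in $\Lambda g\Lambda$ bijectively to $\Lambda$-left cosets in $\Lambda g^{-1}\Lambda$; combined with the equality of left and right coset counts (implicit in \eqref{eq_left_right}) and with \eqref{eq_deg_scaling}, this gives $\textnormal{deg}(\textnormal{diag}(1,p^3,p^3))=\textnormal{deg}(\textnormal{diag}(1,1,p^3))$. Your sublattice count for the latter, $(p^9-p^6)/(p^3-p^2)=p^4(p^2+p+1)$, is correct. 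The paper never invokes this duality: it instead performs a second ten-type enumeration for $\textnormal{diag}(1,1,p^3)$ in Proposition~\ref{propo_dec_p_p_p^4}. Your argument would make one of those two enumerations redundant for the degree computation; the paper's approach, on the other hand, is uniform with the rest of the appendix and produces explicit coset representatives when needed. If you want the proposal to stand on its own, promote the duality paragraph from ``check'' to proof and drop the unfinished enumeration, or else finish the seven cases.
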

\begin{proof}[\proofname{} of Proposition \ref{propo_dec_1_p^3_p^3}]%
By \eqref{eq_practice}, the possible upper-triangular column reduced matrices $\delta$ that can occur in the decomposition into $\Lambda$-right cosets are
\begin{eqnarray*}
\text{Type $1$:} \begin{pmatrix}
p^3 & d_3 &  \\
& p^3 &  \\
& & 1
\end{pmatrix} & \rightsquigarrow & \uple{d}(\delta)=(1,(p^3,d_3),p^6), \\
\text{Type $2$:} \begin{pmatrix}
p^3 &  & e_3 \\
& 1 & f_3 \\
& & p^3
\end{pmatrix} & \rightsquigarrow & \uple{d}(\delta)=(1,(p^3,e_3),p^6), \\
\text{Type $3$:} \begin{pmatrix}
1 & d_3 & e_3 \\
& p^3 & f_3 \\
& & p^3
\end{pmatrix} & \rightsquigarrow & \uple{d}(\delta)=(1,(p^3,f_3,d_3f_3),p^6)
\end{eqnarray*}
and\begin{eqnarray*}
\text{Type $4$:} \begin{pmatrix}
p^3 & d_2 & e_1 \\
& p^2 & f_1 \\
& & p
\end{pmatrix} & \rightsquigarrow & \uple{d}(\delta)=((p,d_2,e_1,f_1),(p^3,pd_2,d_2f_1-p^2e_1),p^6), \\
\text{Type $5$:} \begin{pmatrix}
p^3 & d_1 & e_2 \\
& p & f_2 \\
& & p^2
\end{pmatrix} & \rightsquigarrow & \uple{d}(\delta)=((p,d_1,e_2,f_2),(p^3,p^2d_1,d_1f_2-pe_2),p^6), \\
\text{Type $6$:} \begin{pmatrix}
p^2 & d_3 & e_1 \\
& p^3 & f_1 \\
& & p
\end{pmatrix} & \rightsquigarrow & \uple{d}(\delta)=((p,d_3,e_1,f_1),(p^3,pd_3,p^2f_1,d_3f_1),p^6), \\
\text{Type $7$:} \begin{pmatrix}
p^2 & d_1 & e_3 \\
& p & f_3 \\
& & p^3
\end{pmatrix} & \rightsquigarrow & \uple{d}(\delta)=((p,d_1,e_3,f_3),(p^3,p^2f_1,d_1f_3-pe_3),p^6), \\
\text{Type $8$:} \begin{pmatrix}
p & d_3 & e_2 \\
& p^3 & f_2 \\
& & p^2
\end{pmatrix} & \rightsquigarrow & \uple{d}(\delta)=((p,d_3,e_2,f_2),(p^3,p^2d_3,pf_2,d_3f_2),p^6), \\
\text{Type $9$:} \begin{pmatrix}
p & d_1 & e_3 \\
& p^2 & f_3 \\
& & p^3
\end{pmatrix} & \rightsquigarrow & \uple{d}(\delta)=((p,d_1,e_4,f_4),(p^3,pf_3,d_2f_3-p^2e_3),p^6)
\end{eqnarray*}
and
\begin{eqnarray*}
\text{Type $10$:} \begin{pmatrix}
p^2 & d_2 & e_2 \\
& p^2 & f_2 \\
& & p^2
\end{pmatrix} & \rightsquigarrow & \uple{d}(\delta)=((p^2,d_2,e_2,f_2),(p^4,p^2d_2,p^2f_2,d_2f_2-p^2e_2),p^6)
\end{eqnarray*}
where $0\leq d_j, e_j, f_j<p^j$ for $j=1,2,3$. Let us count the matrices among the previous ones, whose determinantal vector is the same as the one of $\textnormal{diag}(1,p^3,p^3)$, namely $(1,p^3,p^6)$.
\par
Let us consider the matrices of type $1$. The condition on $d_2(\delta)$ implies $d_3=0$. There is $1$ relevant matrix of type $1$.
\par
Let us consider the matrices of type $2$. The condition on $d_2(\delta)$ implies $e_3=0$. There are $p^3$ relevant matrices of type $2$.
\par
Let us consider the matrices of type $3$. The condition on $d_2(\delta)$ implies $f_3=0$. There are $p^6$ relevant matrices of type $3$.
\par
Let us consider the matrices of type $4$. The condition on $d_2(\delta)$ implies $d_2=0$ and $e_1=0$. Then, $d_1(\delta)=1=(p,f_1)$ such that $f_1\neq 0$. There are $p-1$ relevant matrices of type $4$.
\par
Let us consider the matrices of type $5$. The condition on $d_2(\delta)$ implies $d_1=0$ and $e_2=0$. Then, $d_1(\delta)=1=(p,f_2)$ such that $p\nmid f_2$. There are $p^2-p$ relevant matrices of type $5$.
\par
Let us consider the matrices of type $6$. The condition on $d_2(\delta)$ implies $f_1=0$ and $p^2\mid d_3$. Then, $d_1(\delta)=1=(p,e_1)$ such that $e_1\neq 0$. There are $p(p-1)$ relevant matrices of type $6$.
\par
Let us consider the matrices of type $7$. The condition on $d_2(\delta)$ implies $p\mid f_3$ and $p\mid d_1f_3/p-e_3$. One has $d_1\neq 0$ since otherwise $p^2\mid e_2$ by the condition on $d_2(\delta)$ such that $d_1(\delta)=p\neq 1$. Thus, $d_1$ is invertible modulo $p$ and $f_3/p\equiv e_3\overline{d_1}\pmod{p^2}$ is fixed. There are $(p-1)p^3$ relevant matrices of type $7$.
\par
Let us consider the matrices of type $8$. The condition on $d_2(\delta)$ implies $p\mid d_3$ and $f_2=0$. Then, $d_1(\delta)=1=(p,e_2)$ such that $p\nmid e_2$. There are $p^2(p^2-p)$ relevant matrices of type $8$.
\par
Let us consider the matrices of type $9$. The condition on $d_2(\delta)$ implies $p^2\mid f_3$ and $p\mid d_2f_3/p-e_3$. If $f_3=0$ then $p\mid e_3$ and $d_1(\delta)=1=(p,d_2)$ such that $p\nmid d_2$. There are $(p^2-p)p^2$ such matrices. If $f_3\neq 0$ then $d_2\equiv e_3\overline{f_3/p^2}\pmod{p}$ can take $p$ values. Then, $d_1(\delta)=1=(p,e_3)$ such that $p\nmid e_3$. There are $p(p^3-p^2)(p-1)$ such matrices. Finally, there are $p^4(p-1)$ relevant matrices of type $9$.
\par
Let us consider the matrices of type $10$. The condition on $d_2(\delta)$ implies $p\mid d_2$, $p\mid f_2$ and $p\mid d_2f_2/p^2-e_2$. One has $d_2\neq 0$ since otherwise $d_2(\delta)=p^2(p^2,e_2)=p^2d_1(\delta)=p^2\neq p^3$. Thus, $d_2/p$ is invertible modulo $p$ and $f_2$ is fixed by $f_2/p\equiv e_2\overline{d_2/p}\pmod{p}$. Then, $d_1(\delta)=1=(p,e_2)$ such that $p\nmid e_2$, $p\nmid f_2/p$ and $d_2(\delta)=p^3$. There are $(p-1)(p^2-p)$ relevant matrices of type $10$.
\par
One can recover the value of the degree given in \eqref{eq_deg_1_p^3_p^3} by summing all the contributions in the previous paragraphs.
\end{proof}
\subsection{Degree of $\Lambda\textnormal{diag}(1,1,p^3)\Lambda$}%
\begin{proposition}\label{propo_dec_p_p_p^4}
One has
\begin{equation}\label{eq_deg_p_p_p^4}
\textnormal{deg}\left(\textnormal{diag}(1,1,p^3)\right)=p^4(p^2+p+1).
\end{equation}
\end{proposition}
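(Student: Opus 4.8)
The plan is to follow exactly the determinantal-divisor method used for the previous propositions of this appendix; by the scaling invariance \eqref{eq_deg_scaling} this simultaneously computes $\textnormal{deg}\left(\textnormal{diag}(p,p,p^4)\right)$, the quantity appearing in the main theorem. The starting point is that the determinantal vector of $\textnormal{diag}(1,1,p^3)$ is $\uple{d}=(1,1,p^3)$, so a $3\times 3$ integer matrix lies in $\Lambda\textnormal{diag}(1,1,p^3)\Lambda$ if and only if its determinantal divisors are $1$, $1$ and $p^3$. By \eqref{eq_practice}, every upper-triangular column reduced representative $\delta$ of a $\Lambda$-right coset in this double coset has diagonal $(p^{\delta_1},p^{\delta_2},p^{\delta_3})$ with $\delta_1+\delta_2+\delta_3=3$ and $0\leq\delta_j\leq 3$. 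There are exactly ten such diagonal types: the three permutations of $(3,0,0)$, the six permutations of $(2,1,0)$, and the single type $(1,1,1)$. For each of them the off-diagonal entries $d,e,f$ of $\delta$ run over the ranges $0\leq d<p^{\delta_2}$ and $0\leq e,f<p^{\delta_3}$ dictated by the column reduced normal form.

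For each diagonal type I would compute the determinantal vector of the general representative and retain only those $\delta$ with $d_1(\delta)=d_2(\delta)=1$; the condition $d_3(\delta)=p^3$ is automatic from the diagonal. Writing $\delta=\left(\begin{smallmatrix}a&d&e\\&b&f\\&&c\end{smallmatrix}\right)$, the nonzero $2\times 2$ minors are $ab$, $af$, $df-eb$, $ac$, $dc$ and $bc$, so the binding constraint is that their gcd equal $1$. Several types are immediate: when two diagonal exponents vanish a unit product minor is forced and the off-diagonal entries remain free, contributing $1$, $p^3$ and $p^6$ for $(3,0,0)$, $(0,3,0)$ and $(0,0,3)$ respectively. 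The six permutations of $(2,1,0)$ are only mildly coupled: there $d_2(\delta)=1$ reduces to a coprimality condition such as $p\nmid d$, $p\nmid e$ or $p\nmid f$ on a single off-diagonal entry, and one simply counts the admissible values in the relevant range. The genuinely delicate case, and the main obstacle, is the scalar-diagonal type $(1,1,1)$, that is $\delta=\left(\begin{smallmatrix}p&d&e\\&p&f\\&&p\end{smallmatrix}\right)$: here every product minor is divisible by $p$, so the condition $d_2(\delta)=1$ rests entirely on the mixed minor $df-eb=df-ep\equiv df\pmod p$, which forces $p\nmid d$ and $p\nmid f$ with $e$ free and contributes $p(p-1)^2$. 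Summing all ten contributions collapses to $p^6+p^5+p^4=p^4(p^2+p+1)$, which is \eqref{eq_deg_p_p_p^4}.

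As an independent check, and because it bypasses the casework entirely, one may argue by lattice counting. The degree is the number of $\Lambda$-right cosets in the double coset, equivalently the number of sublattices $L\subset\Z^3$ of index $p^3$ whose quotient $\Z^3/L$ is cyclic, namely isomorphic to $\Z/p^3\Z$ (the other two double cosets of determinant $p^3$ correspond to the quotient types $\Z/p\Z\times\Z/p^2\Z$ and $(\Z/p\Z)^3$). Such $L$ are in bijection with the surjective homomorphisms $\Z^3\to\Z/p^3\Z$ modulo $\mathrm{Aut}(\Z/p^3\Z)$; there are $p^9-p^6$ such surjections, namely the triples in $(\Z/p^3\Z)^3$ not all divisible by $p$, and $\varphi(p^3)=p^2(p-1)$ automorphisms, whence the degree equals $(p^9-p^6)/(p^2(p-1))=p^4(p^2+p+1)$, in agreement with the direct computation.
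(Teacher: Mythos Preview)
Your primary argument---enumerate the ten diagonal types permitted by \eqref{eq_practice}, impose $d_1(\delta)=d_2(\delta)=1$ type by type, and sum---is exactly the paper's approach; the paper spells out each of the six $(2,1,0)$ cases individually where you only indicate the mechanism, but the method and the totals agree.

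Your lattice-counting check, however, is a genuinely different route not present in the paper. Identifying right cosets in $\Lambda\,\textnormal{diag}(1,1,p^3)\Lambda$ with sublattices $L\subset\Z^3$ satisfying $\Z^3/L\cong\Z/p^3\Z$, and counting these as orbits of surjections $\Z^3\to\Z/p^3\Z$ under $\textnormal{Aut}(\Z/p^3\Z)$, collapses all ten cases to the single quotient $(p^9-p^6)/\varphi(p^3)$. This is shorter and more conceptual, and adapts to the other degrees in the theorem once one is willing to count surjections onto and automorphisms of the relevant finite abelian $p$-groups. The paper's explicit enumeration, by contrast, produces concrete coset representatives, which is what is actually required in Propositions \ref{propo_dec_1_1_p}--\ref{propo_dec_1_p_p^2} to evaluate the multiplication coefficients $m(g_1,g_2;h)$ in \S\ref{sec_proof_eq}; for the present proposition, where only the degree is needed, your alternative is the cleaner argument.
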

\begin{proof}[\proofname{} of Proposition \ref{propo_dec_p_p_p^4}]%
By \eqref{eq_practice}, the possible upper-triangular column reduced matrices $\delta$ that can occur in the decomposition of the $\Lambda$-double coset $\Lambda\textnormal{diag}(1,1,p^3)\Lambda$ into $\Lambda$-right cosets are
\begin{eqnarray*}
\text{Type $1$:} \begin{pmatrix}
p^3 &  &  \\
& 1 &  \\
& & 1
\end{pmatrix} & \rightsquigarrow & \uple{d}(\delta)=(1,1,p^3), \\
\text{Type $2$:} \begin{pmatrix}
1 & d_3 &  \\
& p^3 &  \\
& & 1
\end{pmatrix} & \rightsquigarrow & \uple{d}(\delta)=(1,1,p^3), \\
\text{Type $3$:} \begin{pmatrix}
1 &  & e_3 \\
& 1 & f_3 \\
& & p^3
\end{pmatrix} & \rightsquigarrow & \uple{d}(\delta)=(1,1,p^3)
\end{eqnarray*}
and
\begin{eqnarray*}
\text{Type $4$:} \begin{pmatrix}
1 & d_1 & e_2 \\
& p & f_2 \\
& & p^2
\end{pmatrix} & \rightsquigarrow & \uple{d}(\delta)=(1,(p,f_2),p^3), \\
\text{Type $5$:} \begin{pmatrix}
1 & d_2 & e_1 \\
& p^2 & f_1 \\
& & p
\end{pmatrix} & \rightsquigarrow & \uple{d}(\delta)=(1,(p,f_1),p^3), \\
\text{Type $6$:} \begin{pmatrix}
p &  & e_2 \\
& 1 & f_2 \\
& & p^2
\end{pmatrix} & \rightsquigarrow & \uple{d}(\delta)=(1,(p,e_2),p^3), \\
\text{Type $7$:} \begin{pmatrix}
p^2 &  & e_1 \\
& 1 & f_1 \\
& & p
\end{pmatrix} & \rightsquigarrow & \uple{d}(\delta)=(1,(p,e_1),p^3), \\
\text{Type $8$:} \begin{pmatrix}
p & d_2 &  \\
& p^2 &  \\
& & 1
\end{pmatrix} & \rightsquigarrow & \uple{d}(\delta)=(1,(p,d_2),p^3), \\
\text{Type $9$:} \begin{pmatrix}
p^2 & d_1 &  \\
& p &  \\
& & 1
\end{pmatrix} & \rightsquigarrow & \uple{d}(\delta)=(1,(p,d_1),p^3)
\end{eqnarray*}
and
\begin{eqnarray*}
\text{Type $10$:} \begin{pmatrix}
p & d_1 & e_1 \\
& p & f_1 \\
& & p
\end{pmatrix} & \rightsquigarrow & \uple{d}(\delta)=((p,d_1,e_1,f_1),(p^2,pd_1,pf_1,d_1f_1-pe_1),p^3)
\end{eqnarray*}
where $0\leq d_j, e_j, f_j<p^j$ for $j=1,2,3$. Let us count the matrices among the previous ones, whose determinantal vector is the same as the one of $\textnormal{diag}(1,1,p^3)$, namely $(1,1,p^3)$.
\par
Let us consider the matrices of type $1$. There is $1$ relevant matrix of type $1$.
\par
Let us consider the matrices of type $2$. There are $p^3$ relevant matrices of type $2$.
\par
Let us consider the matrices of type $3$. There are $p^6$ relevant matrices of type $3$.
\par
Let us consider the matrices of type $4$. The condition on $d_2(\delta)$ implies $p\nmid f_2$. There are $p^3(p^2-p)$ relevant matrices of type $4$.
\par
Let us consider the matrices of type $5$. The condition on $d_2(\delta)$ implies $f_1\neq 0$. There are $p^3(p-1)$ relevant matrices of type $5$.
\par
Let us consider the matrices of type $6$. The condition on $d_2(\delta)$ implies $p\nmid e_2$. There are $p^2(p^2-p)$ relevant matrices of type $6$.
\par
Let us consider the matrices of type $7$. The condition on $d_2(\delta)$ implies $e_1\neq 0$. There are $p(p-1)$ relevant matrices of type $6$.
\par
Let us consider the matrices of type $8$. The condition on $d_2(\delta)$ implies $p\nmid d_2$. There are $p^2-p$ relevant matrices of type $7$.
\par
Let us consider the matrices of type $9$. The condition on $d_2(\delta)$ implies $d_1\neq 0$. There are $p-1$ relevant matrices of type $9$.
\par
Let us consider the matrices of type $10$. One has $d_1\neq 0$ since otherwise $p\mid d_2(\delta)$. Thus, $d_1(\delta)=1$. In addition, $f_1\neq 0$ since otherwise $p\mid d_2(\delta)$. There are $p(p-1)^2$ relevant matrices of type $10$.
\par
One can recover the value of the degree given in \eqref{eq_deg_p_p_p^4} by summing all the contributions in the previous paragraphs.
\end{proof}
\bibliographystyle{alpha}
\bibliography{biblio}
\end{document}